\newtheorem{prop}{Proposition}[section]
\newtheorem{thm}[prop]{Theorem}
\newtheorem{lemma}[prop]{Lemma}
\newtheorem{coroll}[prop]{Corollary}
\theoremstyle{definition}
\theoremstyle{remark}
\newtheorem{rmk}[prop]{Remark}
\newcommand{\bD}{\mathbb{D}}
\newcommand{\E}{\mathop{{}\mathbb{E}}\nolimits}
\newcommand{\cF}{\mathscr{F}}
\newcommand{\cL}{\mathscr{L}}
\newcommand{\enne}{\mathbb{N}}
\renewcommand{\P}{\mathbb{P}}
\newcommand{\erre}{\mathbb{R}}
\DeclareMathAlphabet{\mathbbmsl}{U}{bbm}{m}{sl}
\newcommand{\bL}{\mathbbmsl{L}}
\newcommand{\embed}{\hookrightarrow}
\renewcommand{\geq}{\geqslant}
\renewcommand{\leq}{\leqslant}
\newcommand{\loc}{\textnormal{loc}}
\numberwithin{equation}{section}
\DeclarePairedDelimiter\abs{\lvert}{\rvert}
\DeclarePairedDelimiter\norm{\lVert}{\rVert}
\DeclarePairedDelimiterX\ip[2]{\langle}{\rangle}{#1,#2}
\DeclarePairedDelimiterX\cc[2]{[\![}{]\!]}{#1,#2}
\renewenvironment{itemize}{%
  \ifnum\@itemdepth>3 \@toodeep
  \else \advance\@itemdepth\@ne
    \edef\@itemitem{labelitem\romannumeral\the\@itemdepth}%
    \list{\csname\@itemitem\endcsname}%
      {\def\makelabel##1{\hss\llap{\upshape##1}}}%
  \fi
}{%
  \endlist
}
\begin{document}
\begin{center}
  {\Large\bfseries On pointwise Malliavin differentiability of
    solutions to semilinear parabolic SPDEs \par}%
  \vskip 1.5em%
  {
    \lineskip .5em%
    Carlo Marinelli \par}%
  \vskip 1em%
  {December 31, 2021}%
\end{center}
\par
\vskip 1em

\begin{abstract}
  We obtain estimates on the first-order Malliavin derivative of mild
  solutions, evaluated at fixed points in time and space, to a class
  of parabolic dissipative stochastic PDEs on bounded domain of
  $\mathbb{R}^d$. In particular, such equations are driven by
  multiplicative Wiener noise and the nonlinear drift term is the
  superposition operator associated to a locally Lipschitz continuous
  function satisfying suitable polynomial growth bounds. The main
  arguments rely on the well-posedness theory in the mild sense for
  stochastic evolution equations in Banach spaces, monotonicity,
  and a comparison principle.
\end{abstract}


\section{Introduction}
Consider the stochastic evolution equation
\begin{equation}
  \label{eq:0}
  du + Au\,dt = f(u)\,dt + \sigma(u)B\,dW(t),
  \qquad u(0)=u_0,
\end{equation}
where $A$ is the negative generator of an analytic semigroup of
contractions $S$ on $L^q(G)$, with $q \geq 2$ and $G \subset \erre^d$
a smooth bounded domain; $f \colon \erre \to \erre$ is a decreasing
locally Lipschitz continuous function such that $\abs{f(x)} \lesssim 1
+ \abs{x}^m$; $\sigma\colon \erre \to \erre$ is a Lipschitz continuous
function; $W$ is a cylindrical Wiener process on a separable Hilbert
space $U$, and $B$ is $\gamma$-Radonifying from $U$ to
$L^q(G)$. Precise assumptions are given in \S\ref{ssec:ass} below. 

In \cite{cm:POTA21} we proved that the unique mild solution to
\eqref{eq:0}, which is continuous and time and space under mild extra
assumptions, is such that the law of the random variable $u(t,x)$ is
absolutely continuous with respect to Lebesgue measure for every
$(t,x) \in \mathopen]0,T] \times G$. This was achieved considering
first the case where $f$ is Lipschitz continuous, then using a
localization argument implying that $u(t,x) \in \bD^{1,p}_\loc$, hence
applying the well-known Bouleau-Hirsch criterion. This reasoning, by
its very nature, does not allow to show that $u(t,x)$ belongs to any
Malliavin space. On the other hand, for equations with additive noise,
i.e. for which $\sigma$ does not depend on $u$, it was proved in
\cite{cm:POTA13} that $u(t,x)$ belongs to $\bD^{1,p}$ for all $p \geq
1$, and even to $\bD^{k,p}$ is the coefficients $f$ and $\sigma$ are
of class $C^k$ with all derivatives satisfying polynomial
bounds. Equations driven by additive noise are in fact much easier to
treat because the corresponding equations for Malliavin derivatives
are deterministic PDEs with random coefficients, for which a large
number of analytic tools can be applied pathwise.
In the case of equations driven by multiplicative noise it is
unfortunately impossible to follow this route, as the Malliavin
derivatives of solutions satisfy linear stochastic PDEs of rather
unfriendly character: for instance, the initial condition contains a
Dirac measure in time. 

Our goal is to fill at least in part the gap between the results of
\cite{cm:POTA21} and those of \cite{cm:POTA13}, showing that $u(t,x)$
belongs to $\bD^{1,p}$ for all $p \geq 1$. To this purpose, we develop
two different approaches: one uses stochastic calculus in
vector-valued $L^q$ spaces and monotonicity, and another one a
comparison principle for mild solutions to stochastic evolution
equations. In the former approach we need a kind of smoothness (or
boundedness) assumption on the noise, while in the latter the
covariance of the noise is assumed to be a positivity-preserving
operator (such an assumption was used in \cite{cm:POTA13} as well).

We refer to the introduction of \cite{cm:POTA13} for references to the
(not very extensive) literature on the pointwise Malliavin
differentiability of solutions to equations with coefficients growing
faster than linearly, as well as for a short discussion on potential
applications.

\medskip\par\noindent
\textbf{Acknowledgments.} The hospitality of the Interdisziplin\"ares
Zentrum f\"ur Komplexe Systeme, Universit\"at Bonn, Germany, is
gratefully acknowledged.


\section{Preliminaries}
\subsection{Assumptions and notation}
\label{ssec:ass}
The linear unbounded operator $A$ appearing in \eqref{eq:0} is
supposed to be the (negative) generator of a strongly continuous
analytic semigroup $S$ on $L^2(G)$, with $G$ a smooth bounded domain
of $\erre^d$. Furthermore, we assume that $S$ is self-adjoint and
Markovian. Then $S$ can be restricted in a consistent manner to
analytic semigroups, necessarily of contractions, to all $L^q:=L^q(G)$
spaces, $q \geq 2$. Moreover, $S$ can be extended in a unique way from
$L^q$ to an analytic contraction semigroup on $L^q(G;H)$
(see~\cite{Pis:hol}). Finally, we assume that $S$ admits a kernel,
i.e. that there exists a measurable function
$K\colon \erre_+ \times G^2 \to \erre_+$ such that
\[
  \bigl[ S(t)\phi \bigr](x) = \int_G K_t(x,y)\phi(y)\,dy
\]
for every $\phi \in L^2$.

The function $f\colon \erre \to \erre$ is assumed to be continuously
differentiable, decreasing, and such that
\[
  \abs{f(x)} + \abs{f'(x)} \lesssim 1 + \abs{x}^m \qquad \forall x \in
  \erre
\]
for some $m \in \erre_+$. More generally, it would be enough to assume
that $x \mapsto f(x) - a x$ is decreasing, for some $a \in \erre_+$,
and that $f$ is just almost everywhere differentiable. This little
extra generality causes too much notational and technical nuisance to
be justified.

The function $\sigma\colon \erre \to \erre$ is of class $C^1$ with
bounded derivative. The more general case of $\sigma$ being just local
Lipschitz continuous with linear growth could be treated as well,
again at the cost of some nuisance.

$W$ is a cylindrical Wiener process on a separable Hilbert space $U$
defined on a filtered probability space
$(\Omega,\cF,(\cF_t)_{t \in [0,T]},\P)$, with $T \in \erre_+$, where
$(\cF_t)_{t \in [0,T]}$ is the completion of the filtration generated
by $W$. All random quantities are assumed to be supported on this
stochastic basis.

The operator $B$ is assumed to belong to $\gamma(U;L^q)$, the space of
$\gamma$-Radonifying operators from $U$ to $L^q$. The precise value of
$q$ will be in the range needed for Proposition~\ref{prop:0} below to
hold. Additional assumptions on $B$ will be stated when needed.

The initial datum $u_0$ is assumed to belong to $C(\overline{G})$,
although less is needed for just existence of a mild solution to
\eqref{eq:0}.

\smallskip

The Hilbert space on which the Malliavin calculus will be based is
$L^2(0,T;L^2_Q)$. Here $Q:=BB^*$ is a symmetric trace-class operator
on $L^2$, in particular $Q^{1/2}$ exists and is itself a bounded
operator, for which assume that $\ker Q^{1/2} = \{0\}$. Then $L^2_Q$
is the completion of $L^2$ with respect to the norm induced by the
scalar product
\[
  \ip{f}{g}_Q:=\ip{Qf}{g}_{L^2(G)} = \ip{Q^{1/2}f}{Q^{1/2}g}_{L^2(G)}.
\]
The space $L^2(0,T;L^2_Q)$ will be denoted simply by $H$.  We refer
to, e.g., \cite{nualart} for notation and terminology pertinent to
Malliavin calculus, as well as to \cite{Sanz-libro} for an exposition
geared towards SPDEs.

\smallskip

We shall write $a \lesssim b$ if there exists a constant $C$ such that
$a \leq Cb$. If the constant depends on some parameters of interest,
we indicate this as subscripts to the symbol. If $a \lesssim b$ and
$b \lesssim a$, we shall write $a \eqsim b$.
To shorten the notation of functional spaces, we set
$\bL^p:=L^p(\Omega)$, $L^p_x:=L^p(G)$, $L^p_xH:=L^p(G;H)$, and
$L^p_t:=L^p(0,T)$, for any $p$ for which they make sense. Moreover,
instead of writing, for instance, $L^p(\Omega;L^q(0,T))$ we shall sometimes
simply write $\bL^p L^q_t$, and similarly for other spaces.
We shall abbreviate deterministic and stochastic convolutions writing
\[
  S \ast g := \int_0^\cdot S(\cdot-s)g(s)\,ds \quad \text{and} \quad
  S \diamond G := \int_0^\cdot S(\cdot-s)G(s)\,dW(s).
\]

\subsection{Well-posedness}
The following well-posedness result for \eqref{eq:0}, that follows
from~\cite[Theorem~4.9]{KvN2} (cf.~\cite[Proposition~2.5]{cm:POTA21}),
allows to consider the Malliavin derivative of the mild solution
pointwise, i.e. for each $(t,x) \in [0,T] \times G$.
\begin{prop}
 \label{prop:0}
  Assume that
  \begin{equation}
    \label{eq:dqp}
  \frac{d}{2q} < \frac12 - \frac{1}{p}
  \end{equation}
  and $\sigma \colon \erre \to \erre$ is locally Lipschitz continuous
  with linear growth. If $u_0 \in L^p(\Omega;C(\overline{G}))$, then
  \eqref{eq:0} admits a unique $C(\overline{G})$-valued mild solution
  $u$, which satisfies the estimate
  \[
    \E \sup_{t \leq T} \norm[\big]{u(t)}_{C(\overline{G})}^p \lesssim_p
    1 + \E\norm[\big]{u_0}_{C(\overline{G})}^p.
  \]
\end{prop}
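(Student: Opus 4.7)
The plan is to invoke the general well-posedness theory for semilinear stochastic evolution equations in UMD Banach spaces, specifically \cite[Theorem~4.9]{KvN2}, applied with $E=L^q(G)$. What one needs to check is that, under the present hypotheses, the mild solution takes values in the smaller space $C(\overline G)$ and that the quoted moment bound holds.

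First I would handle the case where $\sigma$ is globally Lipschitz, via a contraction mapping argument in $L^p(\Omega; C([0,T_0]; C(\overline G)))$ on a small time interval $[0,T_0]$, and then iterate. The crucial analytic input is the control of the stochastic convolution $S\diamond(\sigma(u)B)$ in $C(\overline G)$. This is obtained by the factorization method: for some $\alpha\in(1/p,1/2)$ one reduces to bounding the stochastic integral in a fractional-power space $D((I+A)^{\alpha-1/p})$ inside $L^q$ and then applying a Sobolev-type embedding of this space into $C(\overline G)$. Such an embedding is valid precisely when $\alpha-1/p>d/(2q)$, which is possible if and only if \eqref{eq:dqp} holds. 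The $\gamma$-Radonifying assumption on $B$ provides the corresponding estimate for the stochastic integral in $L^q$, while the deterministic convolution $S\ast f(u)$ is treated similarly using $\abs{f(u)}\lesssim 1+\abs{u}^m$ together with the boundedness of $u$ in $C(\overline G)$ along the iteration.

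Extending to locally Lipschitz $\sigma$ with linear growth then proceeds by truncation: let $\rho_n\in C^\infty(\erre)$ be a smooth cut-off with $\rho_n(x)=x$ for $\abs{x}\leq n$, set $\sigma_n:=\sigma\circ\rho_n$, which is globally Lipschitz, and denote by $u_n$ the corresponding solution. Define $\tau_n:=\inf\{t\leq T: \norm{u_n(t)}_{C(\overline G)}\geq n\}$; pathwise uniqueness gives $u_n=u_m$ on $[0,\tau_n\wedge\tau_m]$, so a maximal solution can be pasted together. An a priori moment estimate on $\norm{u_n}_{C(\overline G)}^p$, based on Burkholder--Davis--Gundy, the linear growth of $\sigma_n$ uniform in $n$, and Gronwall's inequality, yields the claimed bound with constants independent of $n$; this forces $\tau_n\to T$ almost surely, producing the global solution and the stated estimate.

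The main obstacle will be the simultaneous control, in the $C(\overline G)$-norm, of the drift term (only locally Lipschitz with polynomial growth, which makes a pathwise $L^\infty$-type estimate unavoidable since there is no obvious duality trick available here) and the stochastic convolution (which demands sharp regularity estimates for the analytic semigroup on $L^q$). Condition \eqref{eq:dqp} is exactly what allows both requirements to be reconciled within the fixed-point scheme.
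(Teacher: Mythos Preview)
Your outline is essentially correct and matches the paper's treatment: the paper does not give a proof at all, but simply records the statement as a consequence of \cite[Theorem~4.9]{KvN2} (and \cite[Proposition~2.5]{cm:POTA21}). Your sketch is a reasonable unpacking of what lies behind that citation, in particular the role of condition~\eqref{eq:dqp} in securing the embedding $E^q_\eta \hookrightarrow C(\overline G)$ for some $\eta<1/2-1/p$, which is exactly what makes the factorization argument land in $C(\overline G)$.

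One point you gloss over: for the drift you only invoke the polynomial bound $\abs{f(x)}\lesssim 1+\abs{x}^m$, but this alone does not deliver a global-in-time a~priori estimate; the decreasing (dissipative) nature of $f$ is what prevents blow-up and is used in \cite{KvN2} to close the argument. In your sketch the truncation and Gronwall step are written for $\sigma$, but the analogous issue for $f$ (locally Lipschitz, superlinear growth) is the more delicate one, and dissipativity is the missing ingredient you should name explicitly. Apart from this, your plan is in line with the cited references and hence with the paper.
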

\noindent Condition~\eqref{eq:dqp} will be in force throughout the paper.

\subsection{Maximal estimates for convolutions}
We shall use two maximal estimates for deterministic and stochastic
convolutions. The first one is elementary, and the second one is a
special case of \cite[Proposition~4.2]{vNVW} (the proof of which is an
extension of the factorization method introduced in \cite{DPKZ}).

Let $S$ be a strongly continuous analytic semigroup on a Banach space
$E$ with generator $-A$.\footnote{The semigroup and its generator do
  not need to satisfy the assumptions of \S\ref{ssec:ass}.} We shall
denote by $E_\alpha$, $\alpha \in \erre$, the usual domains of powers
of (suitable shifts of) $A$, if $\alpha \geq 0$, and the corresponding
extrapolation spaces if $\alpha<0$. If $E=L^q$ or $L^q(G;H)$, with $H$
a Hilbert space, we shall write $E^q_\alpha$ or $E^q_\alpha(H)$,
respectively.

\begin{lemma}
  \label{lm:DPKZ}
  Let $r \in \mathopen]1,\infty\mathclose]$ and
  $\eta \in [0,1/r'\mathclose[ = [0,1-1/r\mathclose[$. Then there exists
  $\varepsilon \in \erre_+$ such that
  \[
    \norm[\big]{S \ast g}_{C([0,T];E_\eta)} \lesssim T^\varepsilon
    \norm[big]{g}_{L^r(0,T;E)}.
  \]
\end{lemma}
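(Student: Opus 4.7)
The plan is to combine the standard smoothing estimate for the analytic semigroup, namely $\norm{S(t)}_{\mathscr{L}(E,E_\eta)} \lesssim t^{-\eta}$ for $t>0$, with Hölder's inequality in time; the restriction $\eta<1/r'$ is exactly what makes the resulting singularity integrable.

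More precisely, for fixed $t\in[0,T]$ one has pointwise
\[
  \norm[\big]{(S\ast g)(t)}_{E_\eta}
  \leq \int_0^t \norm[\big]{S(t-s)g(s)}_{E_\eta}\,ds
  \lesssim \int_0^t (t-s)^{-\eta}\norm[\big]{g(s)}_E\,ds.
\]
Applying Hölder's inequality with conjugate exponents $r$ and $r'$ (the case $r=\infty$, $r'=1$ being trivial), the first factor equals
\[
  \Bigl(\int_0^t (t-s)^{-\eta r'}\,ds\Bigr)^{1/r'}
  = \frac{t^{1/r'-\eta}}{(1-\eta r')^{1/r'}},
\]
which is finite precisely because $\eta r'<1$. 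Setting $\varepsilon := 1/r'-\eta >0$ and taking the supremum over $t\in[0,T]$ yields the claimed estimate with constant uniform in $T$ (up to an absolute multiplicative constant).

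It remains to verify that $S\ast g$ actually belongs to $C([0,T];E_\eta)$, and not merely to $L^\infty(0,T;E_\eta)$. For $g\in C_c(\mathopen]0,T\mathclose[;E_\eta)$ (a dense subspace of $L^r(0,T;E)$ when $r<\infty$, and for which the integral is well-defined when $r=\infty$), the strong continuity of $S$ on $E_\eta$ together with dominated convergence gives continuity of $t\mapsto (S\ast g)(t)$ in $E_\eta$ in a standard way. The uniform bound just established then extends continuity to general $g\in L^r(0,T;E)$ by a density/completeness argument.

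The only potentially delicate point is the continuity upgrade; but since it relies solely on the uniform bound and a routine approximation, there is no real obstacle. The estimate itself is just Young's inequality in disguise, with the power $T^\varepsilon$ arising as the slack between $\eta$ and the critical exponent $1/r'$.
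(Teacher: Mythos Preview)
Your argument is correct and follows exactly the same route as the paper: apply the analytic smoothing estimate $\norm{S(t)}_{\mathscr{L}(E,E_\eta)}\lesssim t^{-\eta}$ and then H\"older's inequality in time, using $\eta r'<1$ to make the kernel integrable. If anything, your version is slightly more complete, since you identify $\varepsilon=1/r'-\eta$ explicitly and address the continuity in $t$ (which the paper's proof omits, giving only the $L^\infty$ bound).
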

\begin{proof}
  By the analyticity of $S$,
  \begin{align*}
    \norm[\bigg]{\int_0^t S(t-s)g(s)\,ds}_{E_\eta}
    &\leq \int_0^t (t-s)^{-\eta} {\norm{g(s)}}_E\,ds\\
    &\leq \biggl( \int_0^t s^{-\eta r'}\,ds \biggr)^{1/r'} 
    \norm[big]{g}_{L^r(0,T;E)},
  \end{align*}
  where $\eta r' \in [0,1\mathclose[$.
\end{proof}

\begin{lemma}
  \label{lm:msc}
  Let $E$ be a UMD Banach space, and the positive constants
  $\alpha$, $p$, and $\eta$ be such that
  \[
    \alpha \in \mathopen]0,1/2\mathclose[, \quad 
    p \in \mathopen]2,\infty\mathclose[, \quad
    \theta < \alpha - \frac{1}{p}.
  \]
  Then, for any $G\colon \Omega \times [0,T] \to \cL(U;E)$ such that
  $Gu$ is measurable and adapted for every $u \in U$, there exists
  $\varepsilon \in \erre_+$ such that
  \[
  \norm[\big]{S \diamond G}_{L^p(\Omega;C([0,T];E_\eta)}
  \lesssim T^{\varepsilon} \norm[\big]{(t-\cdot)^{-\alpha}%
    G}_{L^p(\Omega;L^p(0,T;\gamma(L^2(0,t;U);E)))}.
  \]
\end{lemma}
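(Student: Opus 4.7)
The plan is to apply the factorization method of Da Prato, Kwapie\'n, and Zabczyk. For $\alpha \in \mathopen]0,1/2\mathclose[$ the stochastic Fubini theorem (valid in the UMD setting) together with the Beta integral identity yields
\[
  (S \diamond G)(t) = \frac{\sin(\pi\alpha)}{\pi} \int_0^t (t-s)^{\alpha-1} S(t-s) Y(s)\,ds,
  \qquad Y(s) := \int_0^s (s-r)^{-\alpha} S(s-r) G(r)\,dW(r),
\]
so that matters reduce to (i) bounding $Y$ in $\bL^p L^p_t E$ by the right-hand side of the statement, and (ii) a deterministic maximal estimate for the convolution with $(t-s)^{\alpha-1} S(t-s)$ into $C([0,T];E_\eta)$ carrying a factor $T^\varepsilon$.

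For step (i) I would invoke the Burkholder-type inequality for $E$-valued stochastic integrals in UMD spaces, which in terms of the $\gamma$-radonifying norm reads
\[
  \norm[\big]{Y(s)}_{\bL^p(E)} \lesssim
  \norm[\big]{(s-\cdot)^{-\alpha} S(s-\cdot) G}_{\bL^p(\gamma(L^2(0,s;U);E))}.
\]
The operators $S(s-\cdot)$ inside the $\gamma$-norm are absorbed using the $R$-boundedness of $\{S(t):t\in[0,T]\}$, which holds for any analytic semigroup on a UMD space, together with the ideal property of the $\gamma$-radonifying norm. Raising to the power $p$ and integrating in $s$, Fubini then produces precisely the right-hand side of the lemma, without the $T^\varepsilon$ prefactor.

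For step (ii) the analyticity of $S$ gives $\norm{S(t-s) Y(s)}_{E_\eta} \lesssim (t-s)^{-\eta} \norm{Y(s)}_E$, so H\"older's inequality in time with conjugate exponents $p$ and $p'$ bounds the deterministic convolution pathwise by
\[
  \Bigl( \int_0^t (t-s)^{(\alpha-\eta-1)p'}\,ds \Bigr)^{1/p'}\,\norm[\big]{Y}_{L^p(0,T;E)}
  \lesssim T^{\alpha-\eta-1/p}\,\norm[\big]{Y}_{L^p(0,T;E)};
\]
the time integral converges exactly under the standing hypothesis $\eta < \alpha - 1/p$, and the exponent of $T$ is the required $\varepsilon$. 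Taking the supremum over $t \in [0,T]$, using that the convolution with such a kernel lands in $C([0,T];E_\eta)$ for $Y \in L^p_t E$, and finally taking the $L^p(\Omega)$ norm of both sides closes the argument.

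The main obstacle is the absorption of the operators $S(s-\cdot)$ inside the $\gamma$-norm in step (i): because the semigroup depends on the integration variable, a mere uniform operator-norm bound does not suffice and one genuinely needs $R$-boundedness of the orbit (which in the concrete setting of this paper, $E=L^q$, can also be read off from contractivity). Once that point is handled, the remainder combines a standard UMD stochastic integration inequality with a singular-kernel estimate of the same flavour as the one used in the proof of Lemma~\ref{lm:DPKZ}.
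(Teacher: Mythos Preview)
Your proposal is correct and follows precisely the route the paper relies on: the paper does not give its own proof of this lemma but cites it as a special case of \cite[Proposition~4.2]{vNVW}, whose proof is exactly the factorization argument of \cite{DPKZ} that you sketch. Your identification of the absorption of $S(s-\cdot)$ inside the $\gamma$-norm as the one genuinely UMD-specific step is also on target.
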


\subsection{It\^o's formula}
Let $E$ be a UMD Banach space. If $\Phi \in \cL_2(E)$, i.e. $\Phi$ is
a continuous bilinear form on $E$, and $T \in \gamma(U;E)$, we set
\[
\operatorname{Tr}_T \Phi := \sum_{n\in\enne} \Phi(Th_n,Th_n),
\]
for which it is easily seen that
\begin{equation}
  \label{eq:tr-ineq}
  \abs{\operatorname{Tr}_T \Phi} \leq \norm{\Phi}_{\cL_2(E)}
  \norm{T}^2_{\gamma(U;E)}.
\end{equation}
We shall repeatedly use the following It\^o formula, proved
in~\cite{Brz:Ito}.
\begin{lemma}
  Consider the $E$-valued process
  \[
  u(t) = u_0 + \int_0^t b(s)\,ds + \int_0^t G(s)\,dW(s),
  \]
  where
  \begin{itemize}
  \item[(a)] $u_0 \colon \Omega \to E$ is $\cF_0$-measurable;
  \item[(b)] $b \colon \Omega \times [0,T] \to E$ is measurable, adapted and
    with paths in $L_1(0,T;E)$;
  \item[(c)] $G \colon \Omega \times [0,T] \to \cL(U;E)$ is
    $U$-measurable, adapted, stochastically integrable with respect to
    $W$, and with paths in $L_2(0,T;\gamma(U;E))$.
  \end{itemize}
  For any $\varphi \in C^2(E)$, one has
  \begin{align*}
    \varphi(u(t)) &= \varphi(u_0) + \int_0^t \varphi'(u(s))b(s)\,ds
    + \int_0^t \varphi'(u(s))G(s)\,dW(s)\\
    &\quad + \frac12 \int_0^t \operatorname{Tr}_{G(s)} \varphi''(u(s))\,ds.
  \end{align*}
\end{lemma}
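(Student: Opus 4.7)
My strategy is to approximate the integrands $b$ and $G$ by elementary adapted step processes, apply a second-order Taylor expansion telescopically along a partition of $[0,t]$, and pass to the limit as the mesh tends to zero. The UMD property of $E$ provides the $\gamma$-radonifying It\^o isometry and a Burkholder-Davis-Gundy-type inequality, which are needed both to transfer the approximations between integrands and stochastic integrals and to control the martingale remainders; the $C^2$-regularity of $\varphi$ is what tames the Taylor remainder.

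After a standard localization by hitting times that reduces $b$, $G$, and $u_0$ to bounded objects, I would approximate $G$ in $L^2(0,T;\gamma(U;E))$ by adapted step processes $G^n$ based on partitions $0=t^n_0<\dots<t^n_{N_n}=T$, and $b$ analogously by $b^n$. Denoting by $u^n$ the process built from $b^n$ and $G^n$, the UMD stochastic integration estimates yield $u^n \to u$ uniformly in time in $L^p(\Omega;E)$. Since $\varphi$ and its derivatives are uniformly continuous on the bounded set in which the approximants live, it suffices to prove the It\^o identity for $u^n$ and then let $n\to\infty$ term by term.

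For such a $u^n$, I would telescope $\varphi(u^n(t)) - \varphi(u_0) = \sum_k [\varphi(u^n(t_{k+1})) - \varphi(u^n(t_k))]$ and expand each summand to second order in
\[
  \Delta u_k := u^n(t_{k+1}) - u^n(t_k) = b^n_k\,\Delta t_k + G^n_k\,\Delta W_k.
\]
The first-order contributions $\varphi'(u^n(t_k))\Delta u_k$ produce, after refinement, the Bochner integral $\int_0^t \varphi'(u)b\,ds$ and the stochastic integral $\int_0^t \varphi'(u)G\,dW$ in the limit. The mixed cross term $\varphi''(u^n(t_k))(b^n_k\Delta t_k,\,G^n_k\Delta W_k)$ and the pure drift piece $\varphi''(u^n(t_k))(b^n_k\Delta t_k,\,b^n_k\Delta t_k)$ are both of order $\Delta t_k$ in $L^2$ and hence disappear; the Taylor remainder is controlled by the modulus of continuity of $\varphi''$ on bounded sets and likewise goes to zero.

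The main obstacle is identifying the pure martingale second-order contribution
\[
  S_n := \tfrac12 \sum_k \varphi''(u^n(t_k))\bigl(G^n_k\Delta W_k,\,G^n_k\Delta W_k\bigr).
\]
Expanding $\Delta W_k$ in an orthonormal basis $(h_j)$ of $U$ and using independence together with the law of the Gaussian coordinates $\ip{\Delta W_k}{h_j}_U$, the $\cF_{t_k}$-conditional mean of each summand equals $\operatorname{Tr}_{G^n_k}\varphi''(u^n(t_k))\,\Delta t_k$, while the martingale fluctuations around it can be bounded in $L^2$ through \eqref{eq:tr-ineq} and vanish with the mesh. The predictable part $\sum_k \operatorname{Tr}_{G^n_k}\varphi''(u^n(t_k))\,\Delta t_k$ then converges to $\int_0^t \operatorname{Tr}_{G(s)}\varphi''(u(s))\,ds$ by continuity of $u$ and dominated convergence, the integrand being dominated via \eqref{eq:tr-ineq} by $\norm{\varphi''}_{\cL_2(E)}\norm{G}^2_{\gamma(U;E)}$. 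Undoing the localization yields the formula in full generality.
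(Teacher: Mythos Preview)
The paper does not prove this lemma at all: it is stated as a black box with the sentence ``We shall repeatedly use the following It\^o formula, proved in~\cite{Brz:Ito}.'' There is therefore no in-paper argument to compare your proposal against.

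That said, your outline is exactly the standard scheme used in the cited literature (Brze\'zniak and, more generally, the Brze\'zniak--van~Neerven--Veraar--Weis treatment of It\^o's formula in UMD spaces): localize, approximate $b$ and $G$ by adapted step processes on a common partition, telescope a second-order Taylor expansion, and identify the second-order martingale contribution as the trace term via conditional expectations over the orthonormal basis of $U$. The only place where your sketch is slightly glib is the claim that ``$\varphi$ and its derivatives are uniformly continuous on the bounded set in which the approximants live'': localization gives you a bounded set for $u$, but after approximation one must also keep $u^n$ in a (possibly slightly larger) bounded set, which is handled by the uniform-in-time convergence $u^n\to u$ you already obtain from the stochastic integration estimates. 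With that caveat, the plan is sound and matches the approach of the reference the paper invokes.
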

\noindent The It\^o formula will be applied to functions of the type
$\norm{\cdot}^q_{L^q(G;H)}$, with $H$ a Hilbert space. The
differentiability of such functions is considered next.

\subsection{Differentiability of the $q$-th power of the norm in
  $L^q$ spaces of Hilbert-space-valued functions}
Let $H$ be a Hilbert space, $(X,\mathscr{A},\mu)$ a $\sigma$-finite
measure space, $q \in [2,\infty\mathclose[$, and denote the Bochner
space of $H$-valued functions $\phi$ such that $\norm{\phi}_H \in
L^q(\mu)$ by $L^q(\mu;H)$.
The duality map of $L^q(\mu;H)$, defined as the function $J \colon
L^q(\mu;H) \to L^{q'}(\mu;H)$ such that
\[
  \ip[\big]{\phi}{J(\phi)} = \norm[\big]{\phi}^2_{L^q(\mu;H)},
\]
where $\ip{\cdot}{\cdot}$ stands for the duality pairing between
$L^q(\mu;H)$ and $L^{q'}(\mu;H)$, is easily seen to be
\[
  J \colon \phi \longmapsto \norm[\big]{\phi}^{2-q}_{L^q(\mu;H)}
         \norm[\big]{\phi(\cdot)}^{q-2}_H \phi(\cdot).
\]
We shall need differentiability properties of a related functional,
namely of
\begin{align*}
  \Phi_q \colon L^q(\mu;H) &\longrightarrow \erre\\ 
  \phi &\longmapsto \norm[\big]{\phi}^q_{L^q(\mu;H)}.
\end{align*}
\begin{prop}
  One has $\Phi_q \in C^2(L^q(\mu;H))$, with
\begin{align*}
  \Phi_q'(u)\colon v
  &\longmapsto q \ip[\big]{\norm{u}_H^{q-2}u}{v} =
    q \int \norm{u(x)}_H^{q-2} {\ip{u(x)}{v(x)}}_H\,\mu(dx),\\
  \Phi_q''(u)\colon (v,w)
  &\longmapsto q \ip[\big]{\norm{u}_H^{q-2}v}{w}
    + q(q-2) \int \norm{u(x)}_H^{q-4} {\ip{u(x)}{v(x)}}_H
      {\ip{u(x)}{w(x)}}_H \,\mu(dx).
\end{align*}
\end{prop}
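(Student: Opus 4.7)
\medskip\noindent\textbf{Proof proposal.} My plan is to reduce the infinite-dimensional Fr\'echet differentiability of $\Phi_q$ to the pointwise analysis of the scalar function $\psi\colon H\to\erre$, $\psi(u):=\norm{u}_H^q$, and then to lift the pointwise formulas by combining dominated convergence with H\"older's inequality. A direct computation shows $\psi\in C^2(H)$ for $q\geq 2$, with
\begin{align*}
  \psi'(u)v &= q\norm{u}_H^{q-2}\ip{u}{v}_H,\\
  \psi''(u)(v,w) &= q\norm{u}_H^{q-2}\ip{v}{w}_H + q(q-2)\norm{u}_H^{q-4}\ip{u}{v}_H\ip{u}{w}_H,
\end{align*}
both expressions being set to zero at $u=0$ when $q>2$. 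The key pointwise bounds are $\abs{\psi'(u)v}\leq q\norm{u}_H^{q-1}\norm{v}_H$ and $\norm[\big]{\psi''(u)}_{\cL_2(H)}\leq q(q-1)\norm{u}_H^{q-2}$.

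Next I would establish Gateaux differentiability of $\Phi_q$ and $\Phi_q'$ with the two claimed formulas. Since $\Phi_q(\phi)=\int\psi(\phi)\,d\mu$, the pointwise chain rule gives $\frac{d}{dt}\psi(\phi+tv)\big|_{t=0}=\psi'(\phi)v$, and the pointwise bound on $\psi'$ combined with H\"older's inequality in exponents $q$ and $q'$ shows that the relevant difference quotients are uniformly $\mu$-integrable, so dominated convergence yields the Gateaux derivative of $\Phi_q$ with the stated formula. The same argument applied to $\psi'$ instead of $\psi$, with H\"older now invoked in exponents $q/(q-2)$, $q$ and $q$, gives the Gateaux derivative of $\Phi_q'$ with the stated formula for $\Phi_q''$. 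The same estimates show that $\Phi_q'(u)\in L^{q'}(\mu;H)$ and $\Phi_q''(u)\in\cL_2(L^q(\mu;H))$ are bounded forms.

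The remaining and technically central step is to prove continuity of the maps $\Phi_q'\colon L^q(\mu;H)\to L^{q'}(\mu;H)$ and $\Phi_q''\colon L^q(\mu;H)\to\cL_2(L^q(\mu;H))$. Given $\phi_n\to\phi$ in $L^q(\mu;H)$, I extract an a.e.-convergent subsequence with a common dominating envelope in $L^q(\mu)$; the continuity of the Nemytskii symbols $u\mapsto q\norm{u}_H^{q-2}u$ (valued in $H$) and $u\mapsto\psi''(u)$ (valued in $\cL_2(H)$), together with the pointwise bounds above, then let me apply dominated convergence to obtain convergence in $L^{q'}(\mu;H)$ and in $L^{q/(q-2)}(\mu;\cL_2(H))$, respectively. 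H\"older's inequality converts this into convergence in the required dual and bilinear-operator norms, and since the argument works along any subsequence, the full sequences converge. Continuous Gateaux differentiability at two successive orders then upgrades $\Phi_q$ to $C^2(L^q(\mu;H))$ by the standard equivalence.

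The main subtlety is the behaviour of $\psi''$ near $u=0$ when $2<q<4$: the factor $\norm{u}_H^{q-4}$ is singular in isolation, but the clean operator bound $\norm[\big]{\psi''(u)}_{\cL_2(H)}\lesssim\norm{u}_H^{q-2}$ encodes the cancellation provided by the two $\ip{u}{\cdot}_H$ factors and defines a continuous Nemytskii symbol for every $q\geq 2$, giving both the growth control and the continuity needed by the dominated-convergence and H\"older arguments above.
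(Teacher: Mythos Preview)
Your proposal is correct and carries out in detail what the paper only sketches: the paper does not give a full proof but remarks that the first-derivative formula follows from general properties of the duality map, while the second-derivative formula can be obtained either by a direct computation or by first showing that $\norm{\cdot}_H^2\colon L^q(\mu;H)\to L^{q/2}(\mu)$ is of class $C^1$ and then invoking composition rules for differentiable maps between Banach spaces. Your argument is precisely the ``direct computation'' route, executed via pointwise differentiation of $\psi=\norm{\cdot}_H^q$, H\"older, and dominated convergence along a.e.-convergent subsequences with a common majorant; the paper's alternative compositional route would spare you the somewhat delicate handling of $\psi''$ near $0$ for $2<q<4$, at the cost of an extra layer of abstraction, but both approaches are standard and lead to the same result.
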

\noindent In particular, for any $u \in L^q(\mu;H)$,
\begin{equation}\label{eq:nJ}
  \Phi_q'(u) = q \norm[\big]{u}_{L^q(\mu;H)}^{q-2} J(u), \qquad
  \norm[\big]{\Phi'(u)}_{L^{q'}(\mu;H)} = q \norm[\big]{u}_{L^q(\mu;H)}^{q-1},
\end{equation}
and, by H\"older's inequality,
\begin{equation}
  \label{eq:n2}
  \norm[\big]{\Phi_q''(u)}_{\cL_2(L^q(\mu;H))}
  \leq q(q-1) \norm[\big]{u}^{q-2}_{L^q(\mu;H)}.
\end{equation}
The statement about the first-order derivative follows by general
properties of the duality mapping, while the statement about the
second-order derivative can be obtained either by a direct
computation, or showing that
$\norm{\cdot}_H^2 \colon L^q(\mu;H) \to L^{q/2}(\mu)$ is of class
$C^1$, and then applying calculus rules for the composition of
differentiable functions between Banach spaces (see,
e.g.,~\cite{AmbPro}).


\section{The formal equation for Malliavin derivatives}
Let us write the mild formulation of \eqref{eq:0} in the equivalent
form
\begin{align*}
  u(t,x) &= u_0(x) + \int_0^t\!\!\int_G K_{t-s}(x,y) f(u(s,y))\,dy\,ds\\
         &\quad + \int_0^t\!\!\int_G K_{t-s}(x,y) \sigma(u(s,y))
           \,\bar{W}(dy,ds),
\end{align*}
where $\bar{W}$ is a Brownian sheet on $[0,T] \times G$ with
covariance (in space) equal to $Q$ (see~\cite{cm:POTA21} and
references therein).

If $Du(t,x)$ exists for all $(t,x) \in [0,T] \times G$, taking the
Malliavin derivative of both sides yields
\begin{align*}
  Du(t,x) 
  &= v_0(t,x) 
    + \int_0^t\!\!\int_G K_{t-s}(x,y) f'(u(s,y)) 
    Du(s,y)\,dy\\
  &\quad + \int_0^t\!\!\int_G K_{t-s}(x,y) \sigma'(u(s,y)) 
    Du(s,y)\,\bar{W}(dy,ds),
\end{align*}
where 
$v_0\colon \Omega \times [0,T] \times G \to H$ is defined by
\[
  v_0(t,x) := (\tau,z) \mapsto K_{t-\tau}(x,z) \,
  \sigma(u(\tau,z)) \, 1_{[0,t]}(\tau).
\]
It was shown in \cite{cm:POTA21} that $u(t,x) \in \bD^{1,p}_\loc$ for
every $(t,x) \in [0,T] \times G$, and that, for every $x \in G$,
$Du(\cdot,x) = Du_n(\cdot,x)$ on $\cc{0}{T_n}$ for every
$n \in \mathbb{N}$, where is defined $T_n$ is the first time $t$ when
$\norm{u(t)}_{C(\overline{G})}$ reaches $n$, and $u_n$ is the unique
mild solution in $L^p(\Omega;C([0,T];C(\overline{G})))$ to the
equation
\[
  du_n + Au_n\,dt = f_n(u_n)\,dt + \sigma(u_n) B\,dW,
  \qquad u_n(0)=u_0,
\]
with
$f_n\colon x \mapsto f(x)1_{[-n,n]}(x) +
f(nx/\abs{x})1_{[-n,n]^\complement}$. Moreover, $u_n=u$ on
$\cc{0}{T_n}$ for every $n \in \mathbb{N}$. In particular, since $f_n$
is Lipschitz continuous, it follows by \cite[Theorem~3.1]{cm:POTA21}
that $Du_n \in L^\infty(0,T;L^p(\Omega;L^\infty(G;H)))$, hence, by
construction of $Du$, one has at least
$Du \in L^0(\Omega \times [0,T];L^\infty(G;H))$.

Looking at the above identity for $Du$ as an equation for
$L^q(G;H)$-valued processes, writing $v(t,x) = Du(t,x)$, one is lead
to considering the equation
\begin{equation}
\label{eq:v}
  v(t) = v_0(t)
  + \int_0^t S(t-s) f'(u(s)) v(s)\,ds
  + \int_0^t S(t-s) \sigma'(u(s)) v(s) B\,dW(s),
\end{equation}
where, for any $\eta \in \mathopen] d/(2q),1/2-1/p \mathclose[$,
\[
  \norm[\big]{v_0(t)}_{L^\infty(G;H)} \lesssim
  \norm[\big]{S(t-\cdot)\sigma(u_\lambda)B}_{L^2(0,t;\gamma(U;E^q_\eta))},
\]
hence
\[
  \norm[\big]{v_0}_{L^\infty([0,T] \times G;H)} \lesssim
  \bigl( 1 + \norm[\big]{\sigma(u)}_{C([0,T];C(\overline{G}))} \bigr)
  \norm[\big]{B}_{\gamma(U;L^q)} T^{(1-2\eta)/2}.
\]
This implies
\begin{align*}
  \norm[\big]{v_0}_{L^p(\Omega;L^\infty([0,T] \times G;H))}
  &\lesssim_T \Bigl(1 + \norm[\big]{u}_{L^p(\Omega;C([0,T];C(\overline{G})))}
    \Bigr) \norm[\big]{B}_{\gamma(U;L^q)}\\
  &\lesssim \Bigl(1 + \norm[\big]{u_0}_{C(\overline{G})} \Bigr)
    \norm[\big]{B}_{\gamma(U;L^q)}.
\end{align*}
A mild solution to \eqref{eq:v} can be constructed by localization
arguments. As a first step, let us replace $f_n$, without loss of
generality, with the smoother version defined by
\[
f_n(x) := f(0) + \int_0^x f'(y) \chi_n(y)\,dy,
\]
with $\chi_n\colon \erre \to [0,1]$ of class $C^1$ such that
$\abs{\chi'_n} \leq 1$ and
\[
\operatorname{supp} \chi_\lambda = [-n-1,n+1],
\qquad \bigl.\chi_n\bigr\vert_{[-n,n]} = 1.
\]
Let us define the process $F_n := f'_n(u)$, and consider the equation
\begin{equation}
\label{eq:vn}
  v_n(t) = v_0(t)
  + \int_0^t S(t-s) F_n(s) v_n(s)\,ds
  + \int_0^t S(t-s) \sigma'(u(s)) v_n(s) B\,dW(s),
\end{equation}
that, thanks to the boundedness of $F_n$ and $\sigma'$, admits a
(global) mild solution $v_n$ belonging to $\bL^p C_t L^\infty_xH$,
which is unique also in the larger space $L^\infty_t \bL^p
L^q_xH$. Defining $v$ to be the process equal to $v_n$ on
$\cc{0}{T_n}$ for every $n \in \mathbb{N}$, recalling that $T_n$
converges monotonically to $+\infty$, we obtain a mild solution to
\eqref{eq:v} that is necessarily unique in the set of processes that
are locally in $L^\infty_t \bL^p L^q_xH$. Therefore $v=Du$ in
$L^0(\Omega \times [0,T];L^\infty(G;H))$ and, outside a set $N \subset
\Omega \times [0,T]$ of $\P\otimes dt$-measure zero, $Du
1_{\cc{0}{T_n}} \in \bL^p C_t L^\infty_xH$.

Even though $v=Du$ is locally (i.e. on increasing stochastic
intervals) in $\bL^p C_t L^\infty_xH$, it seems not possible to obtain
uniform bounds in this space using only the mild form of the
equation. Without uniform bounds, even in weaker norms, it is not
clear how to show that $u(t,x)$ belongs to any space $\bD^{1,p}$.  The
main obstacle to obtaining such uniform bounds is the deterministic
convolution term in \eqref{eq:v}, essentially because any estimate of
the $L^p(\Omega)$ norm of $f'(u)v$, or of $f'_n(u)v$, will necessarily
involve the $L^s(\Omega)$ norm of $v$, for some $s>p$, as $f'(u)$ is
not in $L^\infty(\Omega)$, or, analogously, the norm of $f'_n(u)$ in
$L^\infty(\Omega)$ may explode as $n$ tends to infinity. Note,
however, that such (admittedly crude) estimates do not take advantage
in any way of the dissipativity of $f$. Estimates that do exploit the
dissipative character of the equation will be obtained in the next
section, at the cost, however, of a kind of smoothness assumption on
the noise.


\section{Estimates with smooth noise}
Throughout this section we assume that $B$ is very regular, i.e. that
$B \in \gamma(U;X)$, with $X$ a Banach space continuously embedded in
$L^\infty(G)$, which could be, for instance, $E^q_\alpha$ with
$\alpha > d/(2q)$. Note that if $B_0 \in \gamma(U;L^q)$, the ideal
property of $\gamma$-Radonifying operators implies that
$B_\varepsilon := (I+\varepsilon A)^{-\alpha}B_0$ belongs to
$\gamma(U;E^q_\alpha)$ for every $\varepsilon>0$ and
$\lim_{\varepsilon \to 0} B_\varepsilon = B$ in $\gamma(U;L^q)$.

Let $(f_\lambda)_{\lambda>0} \subset C^1(\erre)$ be a collection of
decreasing Lipschitz-continuous functions such that $f_\lambda$ and
$f'_\lambda$ converge pointwise as $\lambda \to 0$ to $f$ and $f'$,
respectively. For instance, one may take (as in the previous section)
\[
f_\lambda(x) := f(0) + \int_0^x f'(y)\chi_\lambda(y)\,dy,
\]
with $\chi_\lambda\colon \erre \to [0,1]$ of class $C^1$ such that
$\abs{\chi'_\lambda} \leq 1$ and
\[
\operatorname{supp} \chi_\lambda = [-1/\lambda-1,1/\lambda+1],
\qquad \bigl.\chi_\lambda\bigr\vert_{[-1/\lambda,1/\lambda]} = 1,
\]
or the Yosida approximation
\[
  f_\lambda = \frac{1}{\lambda}\bigl(I - (I-\lambda f)^{-1} \bigr),
\]
where $I\colon \erre \to \erre$ is the identity function.

Recall that the equation
\[
  du_\lambda + Au_\lambda\,dt = f_\lambda(u_\lambda)\,dt +
  \sigma(u_\lambda) B\,dW, \qquad u_\lambda(0)=u_0.
\]
admits a unique mild solution
$u_\lambda \in L^p(\Omega;C([0,T];C(\overline{G})))$ for every $p>0$,
because $u_0 \in C(\overline{G})$ is non-random. Therefore, as proved
in \cite{cm:POTA21}, $u_\lambda(t,x) \in \bD^{1,p}$ for every
$(t,x) \in [0,T] \times G$ and every $p>0$, with
$Du_\lambda \in L^\infty(0,T;L^p(\Omega;L^\infty(G;H)))$, and
\begin{align*}
  Du_\lambda(t,x) 
  &= v_{0,\lambda}(t) 
    + \int_0^t\!\!\int_G K_{t-s}(x,y) f'_\lambda(u_\lambda(s,y)) 
    Du_\lambda(s,y)\,dy\\
  &\quad + \int_0^t\!\!\int_G K_{t-s}(x,y) \sigma'_\lambda(u_\lambda(s,y)) 
    Du_\lambda(s,y)\,\bar{W}(dy,ds),
\end{align*}
where 
\[
  v_{0,\lambda}(t,x) := (\tau,z) \mapsto K_{t-\tau}(x,z) \,
  \sigma(u_\lambda(\tau,z)) \, 1_{[0,t]}(\tau).
\]
We interpret this as an equation for the $L^q(G;H)$-valued process
$v_\lambda$, with $v_\lambda(t)\colon x \mapsto Du_\lambda(t,x)$,
namely
\begin{equation}
\label{eq:vl}
  v_\lambda(t) = v_{0,\lambda}(t)
  + \int_0^t S(t-s) f'_\lambda(u_\lambda(s)) v_\lambda(s)\,ds
  + \int_0^t S(t-s) \sigma'(u_\lambda(s)) v_\lambda(s) B\,dW(s).
\end{equation}
In complete similarity to the previous section one has, for any
$\eta \in \mathopen] d/(2q),1/2-1/p \mathclose[$,
\[
  \norm[\big]{v_{0,\lambda}(t)}_{L^\infty(G;H)} \lesssim
  \norm[\big]{S(t-\cdot)\sigma(u_\lambda)B}_{L^2(0,t;\gamma(U;E^q_\eta))},
\]
hence
\[
    \norm[\big]{v_{0,\lambda}}_{\bL^p L^\infty_{t,x}H}
    \lesssim_p \bigl(1 + \norm{u_0}_{C(\overline{G})} \bigr)
      \norm[\big]{B}_{\gamma(U;L^q)} \, T^{(1-2\eta)/2}.
\]
Moreover, since $f'_\lambda$ is bounded, \eqref{eq:vl} admits a unique
solution $v_\lambda \in \bL^p C_t L^\infty_xH$.

\begin{prop}
  \label{prop:vl}
  The family of processes $(v_\lambda)_{\lambda>0}$ is bounded in
  $L^p(\Omega;C([0,T];L^q(G;H)))$.
\end{prop}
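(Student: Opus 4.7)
The plan is to apply the It\^o formula of Section~2 to $\Phi_q(\tilde v_\lambda(t))$, where $\tilde v_\lambda := v_\lambda - v_{0,\lambda}$, and to exploit two dissipativity sources together with the smoothness of $B$ to produce a $\lambda$-uniform bound. Subtracting $v_{0,\lambda}$ from \eqref{eq:vl} yields
\begin{equation*}
\tilde v_\lambda(t) = \int_0^t S(t-s) f'_\lambda(u_\lambda)\bigl(\tilde v_\lambda+v_{0,\lambda}\bigr)(s)\,ds + \int_0^t S(t-s) \sigma'(u_\lambda)\bigl(\tilde v_\lambda+v_{0,\lambda}\bigr)(s) B\,dW(s),
\end{equation*}
whose strong form, if $\tilde v_\lambda$ were sufficiently regular, reads $d\tilde v_\lambda + A\tilde v_\lambda\,dt = f'_\lambda(u_\lambda) v_\lambda\,dt + \sigma'(u_\lambda) v_\lambda B\,dW$ with $\tilde v_\lambda(0)=0$ and $v_\lambda = \tilde v_\lambda + v_{0,\lambda}$. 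Since $\tilde v_\lambda$ is only a mild solution, I would first regularize by applying the resolvent $R_\mu := (I+A/\mu)^{-1}$ to both sides, obtain $R_\mu\tilde v_\lambda \in D(A)$ satisfying the corresponding strong equation, apply It\^o rigorously there, and then pass to the limit $\mu \to \infty$ in the resulting a priori estimate.

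After It\^o the drift analysis proceeds as follows. The $A$-drift $-q\int \norm{\tilde v_\lambda}^{q-2}_H \ip{\tilde v_\lambda}{A\tilde v_\lambda}_H\,dx$ is non-positive by accretivity of $A$ on $L^q(G;H)$. Substituting $v_\lambda = \tilde v_\lambda + v_{0,\lambda}$, the $f'_\lambda$-drift decomposes into a diagonal part $q\int f'_\lambda(u_\lambda)\norm{\tilde v_\lambda}^q_H\,dx \leq 0$ and a cross part $q\int f'_\lambda(u_\lambda) \norm{\tilde v_\lambda}^{q-2}_H \ip{\tilde v_\lambda}{v_{0,\lambda}}_H\,dx$. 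Bounding the inner product by Cauchy--Schwarz in $H$ and applying Young's inequality with exponents $q/(q-1)$ and $q$ gives
\begin{equation*}
\abs{\text{cross}} \leq q\varepsilon \int \abs{f'_\lambda(u_\lambda)}\norm{\tilde v_\lambda}^q_H\,dx + q C_\varepsilon \int \abs{f'_\lambda(u_\lambda)}\norm{v_{0,\lambda}}^q_H\,dx,
\end{equation*}
and choosing $\varepsilon<1$ renders the first summand absorbable by the non-positive diagonal term, leaving a residual whose $L^p(\Omega;L^1(0,T))$-norm is uniformly bounded in $\lambda$ thanks to the polynomial bound on $f'$, Proposition~\ref{prop:0}, and the already established bound on $v_{0,\lambda}$ in $\bL^p L^\infty_{t,x} H$. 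The It\^o correction is controlled via \eqref{eq:n2}: the ideal property of $\gamma$-Radonifying operators together with $B \in \gamma(U;X) \hookrightarrow \gamma(U;L^\infty)$ gives $\norm{\sigma'(u_\lambda) v_\lambda B}_{\gamma(U;L^q(G;H))} \lesssim \norm{v_\lambda}_{L^q(G;H)}\norm{B}_{\gamma(U;L^\infty)}$, so the trace term is bounded by $C\norm{\tilde v_\lambda}^{q-2}_{L^q_xH}\norm{v_\lambda}^2_{L^q_xH}$ and, after Young, by $C(\norm{\tilde v_\lambda}^q_{L^q_xH} + \norm{v_{0,\lambda}}^q_{L^q_xH})$.

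Taking supremum in $t$, raising to the power $p/q$, taking expectation, and applying the BDG inequality to the stochastic integral term then produces a Gronwall-type inequality for $t \mapsto \E\sup_{s\leq t}\norm{\tilde v_\lambda(s)}^p_{L^q(G;H)}$, from which the desired uniform bound on $v_\lambda = \tilde v_\lambda + v_{0,\lambda}$ follows by the triangle inequality. The main obstacle is the rigorous It\^o argument itself: $\tilde v_\lambda$ is only a mild solution and its structural inhomogeneity $v_{0,\lambda}(t)$ depends on $t$ non-trivially, reflecting the Dirac-in-time initial condition for the Malliavin derivative noted in the introduction. The regularization via $R_\mu$ must be carried out so that the dissipativity inequalities, and in particular the sign in the $f'_\lambda$-analysis, survive the limit $\mu\to\infty$; this will require standard commutator estimates between $R_\mu$ and pointwise multiplication by $f'_\lambda(u_\lambda)$ and $\sigma'(u_\lambda)$. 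A secondary subtlety is that the cross-term treatment essentially uses the \emph{sign} of $f'_\lambda$ rather than its (possibly $\lambda$-dependent) boundedness, so Young's inequality must be applied with the weight $\abs{f'_\lambda(u_\lambda)}$ kept together throughout.
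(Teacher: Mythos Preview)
Your proposal is correct and follows essentially the same route as the paper: subtract $v_{0,\lambda}$, apply It\^o's formula for $\Phi_q$ to (a regularized version of) $\tilde v_\lambda$, discard the $A$-term by accretivity, exploit $f'_\lambda\le 0$ on the drift, use $B\in\gamma(U;X)\hookrightarrow\gamma(U;L^\infty)$ and the ideal property for the correction term, and close with BDG. The only noteworthy variation is in the handling of the $f'_\lambda$-cross term: you keep the weight $\abs{f'_\lambda(u_\lambda)}$ and absorb the $\tilde v_\lambda$-part into the non-positive diagonal via weighted Young, whereas the paper simply drops the diagonal and bounds the remaining pairing $\ip{f'_\lambda(u_\lambda)v_{0,\lambda}}{J(\tilde v_\lambda)}$ by Cauchy--Schwarz plus an unweighted Young step, producing an additional harmless $\norm{\tilde v_\lambda}^q_{L^q_xH}$ term and a residual $\norm{f'_\lambda(u_\lambda)}^q_{L^q_x}\norm{v_{0,\lambda}}^q_{L^\infty_xH}$; the paper then closes via a small-time-plus-iteration argument rather than Gronwall, but this is cosmetic.
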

\begin{proof}
  Setting $\widetilde{v}_\lambda := v_\lambda - v_{0,\lambda}$, in
  view of the boundedness of $v_{0,\lambda}$ it is enough to show
  that $(\widetilde{v}_\lambda)$ is bounded in
  $L^p(\Omega;L^\infty(0,T;L^q(G;H)))$. To this purpose, let us write
  \begin{align*}
    \widetilde{v}_\lambda(t)
    &= \int_0^t S(t-s) f'_\lambda(u_\lambda(s)) \bigl(
       \widetilde{v}_\lambda(s) + v_{0,\lambda}(s) \bigr)\,ds\\
    &\quad \hspace{3em} + \int_0^t S(t-s) \sigma'(u_\lambda(s)) \bigl(
       \widetilde{v}_\lambda(s) + v_{0,\lambda}(s) \bigr)B\,dW(s),
  \end{align*}
  that is the mild form of the differential equation
  \[
    d\widetilde{v}_\lambda + A\widetilde{v}_\lambda\,dt =
    f'_\lambda(u_\lambda) \bigl( \widetilde{v}_\lambda + v_{0,\lambda}
    \bigr)\,dt + \sigma'(u_\lambda) \bigl( \widetilde{v}_\lambda +
    v_{0,\lambda} \bigr)B\,dW
  \]
  with initial condition $\widetilde{v}_\lambda(0)=0$.  It\^o's
  formula for the $q$-th power of the $L^q(G;H)$ norm applied to a
  suitable semimartingale approximation of $\widetilde{v}_\lambda$
  (see, e.g., \cite{cm:SIMA18} for details) yields
  \begin{equation}
    \label{eq:Itoq}
    \begin{split}
      &\norm[\big]{\widetilde{v}_\lambda(t)}_{L^q_xH}^q + q \int_0^t
      \norm[\big]{\widetilde{v}_\lambda(t)}_{L^q_xH}^{q-2}
      \ip[\big]{A\widetilde{v}_\lambda}{J(\widetilde{v}_\lambda)}\,ds\\
      &\hspace{3em} \leq q \int_0^t
      \norm[\big]{\widetilde{v}_\lambda}_{L^q_xH}^{q-2}
      \ip[\big]{f'_\lambda(u_\lambda)\bigl( \widetilde{v}_\lambda
        + v_{0,\lambda} \bigr)}{J(\widetilde{v}_\lambda)}\,ds\\
      &\hspace{3em}\quad + q \int_0^t
      \norm[\big]{\widetilde{v}_\lambda}_{L^q_xH}^{q-2}
      J(\widetilde{v}_\lambda) \sigma'(u_\lambda)
      \bigl( \widetilde{v}_\lambda + v_{0,\lambda} \bigr) B\,dW(s)\\
      &\hspace{3em}\quad + \frac12 q(q-1) \int_0^t
      \norm[\big]{\sigma'(u_\lambda)%
        \bigl( \widetilde{v}_\lambda + v_{0,\lambda} \bigr)
        B}^2_{\gamma(U;L^q_xH)}
      \norm[\big]{\widetilde{v}_\lambda}^{q-2}_{L^q_xH}\,ds.
    \end{split}
  \end{equation}
  Recalling that $f_\lambda$ is decreasing, one has
  \begin{align*}
    \ip[\big]{f'_\lambda(u_\lambda)\bigl( \widetilde{v}_\lambda
    + v_{0,\lambda} \bigr)}{J(\widetilde{v}_\lambda)}
    &\leq
      \ip[\big]{f'_\lambda(u_\lambda)v_{0,\lambda}}{J(\widetilde{v}_\lambda)}\\
    &\leq \frac12 \norm[\big]{\widetilde{v}_\lambda}^2_{L^q_xH}
      + \frac12 \norm[\big]{f'_\lambda(u_\lambda)v_{0,\lambda}}^2_{L^q_xH}\\
    &\leq \frac12 \norm[\big]{\widetilde{v}_\lambda}^2_{L^q_xH}
      + \frac12 \norm[\big]{f'_\lambda(u_\lambda)}^2_{L_x^q}
      \norm[\big]{v_{0,\lambda}}^2_{L^\infty_xH},
  \end{align*}
  hence
  \begin{align*}
    &\norm[\big]{\widetilde{v}_\lambda}_{L^q_xH}^{q-2}
      \ip[\big]{f'_\lambda(u_\lambda)\bigl( \widetilde{v}_\lambda
      + v_{0,\lambda} \bigr)}{J(\widetilde{v}_\lambda)}\\
    &\hspace{3em} \leq \frac12 \norm[\big]{\widetilde{v}_\lambda}_{L^q_xH}^q
      + \frac12 \norm[\big]{\widetilde{v}_\lambda}_{L^q_xH}^{q-2}
      \norm[\big]{f'_\lambda(u_\lambda)}^2_{L_x^q}
      \norm[\big]{v_{0,\lambda}}^2_{L^\infty_xH},
  \end{align*}
  where, by Young's inequality with conjugate exponents $q/(q-2)$ and
  $q/2$,
  \begin{align*}
    &\norm[\big]{\widetilde{v}_\lambda}_{L^q_xH}^{q-2}
      \norm[\big]{f'_\lambda(u_\lambda)}^2_{L_x^q}
      \norm[\big]{v_{0,\lambda}}^2_{L^\infty_xH}\\
    &\hspace{3em} \leq
      \frac{q-2}{q} \norm[\big]{\widetilde{v}_\lambda}_{L^q_xH}^q
      + \frac{2}{q} \norm[\big]{f'_\lambda(u_\lambda)}^q_{L_x^q}
      \norm[\big]{v_{0,\lambda}}^q_{L^\infty_xH}.
  \end{align*}
  The first term on the right-hand side of \eqref{eq:Itoq} is thus
  estimated by
  \begin{align*}
    &\frac{q-2}{2} \int_0^t \norm[\big]{\widetilde{v}_\lambda}_{L^q_xH}^q\,ds
      + \int_0^t \norm[\big]{f'_\lambda(u_\lambda)}^q_{L_x^q}
      \norm[\big]{v_{0,\lambda}}^q_{L^\infty_xH}\,ds\\
    &\hspace{3em} = \frac{q-2}{2} \norm[\big]{%
      \widetilde{v}_\lambda}_{L^q(0,t;L^q_xH)}^q
      + \norm[\Big]{\norm[\big]{f'_\lambda(u_\lambda)}_{L_x^q}
      \norm[\big]{v_{0,\lambda}}_{L^\infty_xH}}^q_{L^q(0,t)}.
  \end{align*}
  Analogously, using the ideal property of $\gamma$-Radonifying
  operators on the diagram
  \[
    U \xrightarrow{B} X \embed L_x^\infty
    \xrightarrow{\sigma'(u)} L^\infty_x
    \xrightarrow{\widetilde{v}_\lambda+v_{0,\lambda}} L^q_xH,
  \]
  denoting the Lipschitz constant of $\sigma$ by $L_\sigma$, one has
  \begin{align*}
    &\norm[\big]{\sigma'(u_\lambda)%
      \bigl( \widetilde{v}_\lambda + v_{0,\lambda} \bigr) B}^2_{\gamma(U;L^q_xH)}
      \norm[\big]{\widetilde{v}_\lambda}^{q-2}_{L^q_xH}\\
    &\hspace{3em} \leq L_\sigma^2 \norm[\big]{\widetilde{v}_\lambda %
      + v_{0,\lambda}}^2_{L^q_xH} \norm[\big]{B}^2_{\gamma(U;X)}
      \norm[\big]{\widetilde{v}_\lambda}^{q-2}_{L^q_xH}\\
    &\hspace{3em} \leq L_\sigma^2 \norm[\big]{B}^2_{\gamma(U;X)} \biggl(
      \frac{q-2}{q} \norm[\big]{\widetilde{v}_\lambda}^q_{L^q_xH}
      + \frac{2}{q} \norm[\big]{\widetilde{v}_\lambda %
      + v_{0,\lambda}}^q_{L^q_xH} \biggr).
  \end{align*}
  The third term on the right-hand side of \eqref{eq:Itoq} is hence
  estimated by
  \begin{align*}
    &L_\sigma^2 \norm[\big]{B}^2_{\gamma(U;X)} \biggl( \frac12 (q-1)(q-2)
      \int_0^t \norm[\big]{\widetilde{v}_\lambda}^q_{L^q_xH}\,ds
      + (q-1) \int_0^t \norm[\big]{\widetilde{v}_\lambda
      + v_{0,\lambda}}^q_{L^q_xH}\,ds \biggr)\\
    &\hspace{3em} = L_\sigma^2 \norm[\big]{B}^2_{\gamma(U;X)} \biggl(
      \frac12 (q-1)(q-2) \norm[\big]{\widetilde{v}_\lambda}^q_{L^q(0,t;L^q_xH)}
      + (q-1) \norm[\big]{\widetilde{v}_\lambda+v_{0,\lambda}}^q_{L^q(0,t;L^q_xH}
      \biggr).
  \end{align*}
  Let us denote the stochastic integral on the right-hand side of
  \eqref{eq:Itoq}, which is a real local martingale, by $M$, and set
  \begin{align*}
    C_1 = C_1(q,B) &:= \Bigl( \frac12(q-2) + \frac12 q(q-1) L_\sigma^2
                     \norm[\big]{B}^2_{\gamma(U;X)} \Bigr)^{1/q},\\
    C_2 = C_2(q,B) &:= (q-1)^{1/q} \bigl( L_\sigma
                     {\norm{B}}_{\gamma(U;E_\eta)} \bigr)^{2/q}.
  \end{align*}
  Then
  \begin{align*}
    \norm[\big]{\widetilde{v}_\lambda}_{C_t L^q_xH} 
    &\leq C_1 \norm[\big]{\widetilde{v}_\lambda(t)}_{L^q_t L^q_xH}
      + C_2 \norm[\big]{v_{0,\lambda}(t)}_{L^q_t L^q_xH}\\
    &\quad + \norm[\Big]{\norm[\big]{f'_\lambda(u_\lambda)}_{L_x^q}
      \norm[\big]{v_{0,\lambda}}_{L^\infty_xH}}_{L^q_t}
      + q^{1/q} \bigl(M_T^*\bigr)^{1/q},
  \end{align*}
  thus also
  \begin{align*}
    \norm[\big]{\widetilde{v}_\lambda}_{\bL^p C_t L^q_xH}
    &\leq C_1 \norm[\big]{\widetilde{v}_\lambda(t)}_{\bL^p L^q_t L^q_xH}
      + C_2 \norm[\big]{v_{0,\lambda}(t)}_{\bL^p L^q_t L^q_xH}\\
    &\quad + \norm[\Big]{\norm[\big]{f'_\lambda(u_\lambda)}_{L_x^q}
      \norm[\big]{v_{0,\lambda}}_{L^\infty_xH}}_{\bL^p L^q_t} 
      + q^{1/q} \norm[\big]{M_T^*}^{1/q}_{\bL^{p/q}}.
  \end{align*}
  The Burkholder-Davis-Gundy inequality yields
  \[
    \norm[\big]{M_T^*}^{1/q}_{\bL^{p/q}} \eqsim
    \norm[\big]{[M,M]_T^{1/2}}^{1/q}_{\bL^{p/q}} =
    \norm[\big]{[M,M]_T^{1/(2q)}}_{\bL^p},
  \]
  where
  \begin{align*}
    [M,M]^{1/2}_T 
    &\leq  L_\sigma \norm{B}_{\gamma(U;X)} \biggl(\int_0^T 
      \norm[\big]{\widetilde{v}_\lambda}^{2(q-1)}_{L^q_xH}
      \norm[\big]{\widetilde{v}_\lambda + v_{0,\lambda}}^2_{L^q_xH} \,ds
      \biggr)^{1/2}\\
    &= L_\sigma \norm{B}_{\gamma(U;X)} \norm[\Big]{%
      \norm[\big]{\widetilde{v}_\lambda}^{q-1}_{L^q_xH}%
      \norm[\big]{\widetilde{v}_\lambda + v_{0,\lambda}}_{L^q_xH}}_{L^2_t}
  \end{align*}
  and, all norms being on $L^q(G;H)$,
  \begin{align*}
    \norm{\widetilde{v}_\lambda}^{q-1}
    \norm{\widetilde{v}_\lambda + v_{0,\lambda}}
    &\leq \norm{\widetilde{v}_\lambda}^q +
      \norm{\widetilde{v}_\lambda}^{q-1} \norm{v_{0,\lambda}}\\
    &\leq \norm{\widetilde{v}_\lambda}^q 
      + \frac{q-1}{q}\norm{\widetilde{v}_\lambda}^q
      + \frac{1}{q} \norm{v_{0,\lambda}}^q,
  \end{align*}
  so that
  \[
    [M,M]_T^{\frac12 \cdot \frac1q} \leq L_\sigma^{1/q}
    \norm{B}^{1/q}_{\gamma(U;X)} \Bigl( (2-1/q)^{1/q}
    \norm[\big]{\widetilde{v}_\lambda}_{L^{2q}_t L^q_xH} + (1/q)^{1/q}
    \norm[\big]{v_{0,\lambda}}_{L^{2q}_t L^q_xH} \Bigr)
  \]
  and
  \[
    \norm[\big]{[M,M]_T^{1/(2q)}}_{\bL^p} \leq L_\sigma^{1/q}
    \norm{B}^{1/q}_{\gamma(U;X)} \Bigl( (2-1/q)^{1/q}
    \norm[\big]{\widetilde{v}_\lambda}_{\bL^p L^{2q}_t L^q_xH}
    + (1/q)^{1/q} \norm[\big]{v_{0,\lambda}}_{\bL^p L^{2q}_t L^q_xH} \Bigr).
  \]
  Setting
  \begin{align*}
    C_3 = C_3(q,B) &:= (2q-1)^{1/q} L_\sigma^{1/q} \norm{B}^{1/q}_{\gamma(U;X)},\\
    C_4 = C_4(q,B) &:= L_\sigma^{1/q} \norm{B}^{1/q}_{\gamma(U;X)}
  \end{align*}
  we are left with
  \begin{align*}
    \norm[\big]{\widetilde{v}_\lambda}_{\bL^p C_t L^q_xH}
    &\leq C_1 \norm[\big]{\widetilde{v}_\lambda}_{\bL^p L^q_t L^q_xH}
      + C_2 \norm[\big]{v_{0,\lambda}}_{\bL^p L^q_t L^q_xH}\\
    &\quad + C_3 \norm[\big]{\widetilde{v}_\lambda}_{\bL^p L^{2q}_t L^q_xH}
      + C_4 \norm[\big]{v_{0,\lambda}}_{\bL^p L^{2q}_t L^q_xH}\\
    &\quad + \norm[\Big]{\norm[\big]{f'_\lambda(u_\lambda)}_{L_x^q}
      \norm[\big]{v_{0,\lambda}}_{L^\infty_xH}}_{\bL^p L^q_t},
  \end{align*}
  where
  \[
    \norm[\Big]{\norm[\big]{f'_\lambda(u_\lambda)}_{L_x^q}
      \norm[\big]{v_{0,\lambda}}_{L^\infty_xH}}_{\bL^p L^q_t}
   \leq \norm[\big]{f'_\lambda(u_\lambda)}_{\bL^{2p} L^q_{t,x}}
      \norm[\big]{v_{0,\lambda}}_{\bL^{2p} L^\infty_{t,x}H},
  \]
  and recalling that $f'_\lambda(x) \lesssim 1+\abs{x}^m$ uniformly
  with respect to $\lambda$ (see, e.g., \cite[p.~295]{cm:POTA13}) and
  $u_\lambda \to u$ in $L^p(\Omega;C([0,T];C(\overline{G})))$ for
  every $p>2$,
  \[
    \norm[\big]{f'_\lambda(u_\lambda)}_{\bL^{2p} L^q_{t,x}}
    \lesssim 1 + \norm[\big]{u_\lambda}^m_{\bL^{2mp} L^{mq}_{t,x}}
    \lesssim 1 + \norm[\big]{u}^m_{\bL^{2mp} C_{t,x}}
    \lesssim 1 + \norm[\big]{u_0}^m_{C_x},
  \]
  hence
  \[
    \norm[\Big]{\norm[\big]{f'_\lambda(u_\lambda)}_{L_x^q}
      \norm[\big]{v_{0,\lambda}}_{L^\infty_xH}}_{\bL^p L^q_t}
    \lesssim 1 + \norm[\big]{u_0}^{m+1}_{C(\overline{G})},
  \]
  with an implicit constant that depends, among others, on $p$.
  Since the norm of the continuous embedding
  $L^\infty(0,T) \embed L^r(0,T)$ is $T^{1/r}$ for any $r \geq 1$, we
  conclude that, for $T_0$ sufficiently small,
  $(\widetilde{v}_\lambda)_{\lambda>0}$ is bounded in
  $L^p(\Omega;L^\infty(0,T_0;L^q(G;H)))$. By an iteration argument,
  the same statement is true with $T_0$ replaced by $T$.
\end{proof}

\begin{rmk}
  Monotonicity techniques would provide sharper results if it were
  possible to write either $v_{0,\lambda}(t) = S(t) \zeta_{0,\lambda}$
  for some $\zeta_{0,\lambda} \in L^q(G;H)$ or
  $v_{0,\lambda} = S \ast \zeta_\lambda$, for a process
  $\zeta_\lambda$ with paths in $L^1(0,T;L^q(G;H))$. Unfortunately
  $v_{0,\lambda}(t)$ seems to be too ``singular'' to allow such
  representations. For this reason we proceeded by changing variable
  in the previous proof.
\end{rmk}

The boundedness of $(v_\lambda)$ just established immediately implies
the following compactness properties in suitable weak topologies.
\begin{coroll}
  There exist $\zeta \in L^\infty(0,T;L^p(\Omega;L^q(G;H)))$ such that
  \begin{align*}
  v_\lambda &\longrightarrow \zeta \quad \text{weakly* in }
  L^\infty(0,T;L^p(\Omega;L^q(G;H))),\\
  v_\lambda &\longrightarrow \zeta \quad \text{weakly in }
  L^p(\Omega \times [0,T]; L^q(G;H)).
  \end{align*}
\end{coroll}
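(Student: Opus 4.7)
The plan is to deduce both convergences as standard consequences of weak compactness from the uniform bound furnished by Proposition~\ref{prop:vl}. Since
\[
\norm{v_\lambda}_{L^\infty_t \bL^p L^q_x H} \leq \norm{v_\lambda}_{\bL^p C_t L^q_x H}
\qquad\text{and}\qquad
\norm{v_\lambda}_{\bL^p L^p_t L^q_x H} \leq T^{1/p} \norm{v_\lambda}_{\bL^p C_t L^q_x H},
\]
the family $(v_\lambda)_{\lambda>0}$ is bounded in both $L^\infty(0,T;\bL^p L^q_x H)$ and $L^p(\Omega \times [0,T]; L^q(G;H)) \equiv \bL^p L^p_t L^q_x H$. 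It then remains to extract subsequences converging in the appropriate topologies and to identify their limits.

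Since $p, q \in \mathopen]1,\infty\mathclose[$ and $H$ is Hilbert, the space $\bL^p L^p_t L^q_x H$ is reflexive, so by Eberlein--\v{S}mulian a subsequence $v_{\lambda_n}$ converges weakly therein to some $\tilde\zeta$. For the first convergence, set $F := \bL^p L^q_x H$; reflexivity of $F$ entails that $F^*$ has the Radon--Nikodym property, hence by standard Bochner duality
\[
\bigl( L^1(0,T; F^*) \bigr)^* = L^\infty(0,T; F).
\]
Since the filtration is generated by a cylindrical Wiener process on a separable Hilbert space, $\cF$ is countably generated and $F^* = \bL^{p'} L^{q'}_x H$ is separable, hence so is $L^1(0,T; F^*)$. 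Banach--Alaoglu then extracts a further subsequence, still denoted $v_{\lambda_n}$, converging weakly* to some $\zeta \in L^\infty(0,T; F)$.

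To identify $\zeta = \tilde\zeta$ I would test both limits against a family of functions dense in, and contained in, each of the two preduals — for instance, bounded $L^{q'}(G;H)$-valued simple functions on $\Omega \times [0,T]$, which belong simultaneously to $L^1(0,T; \bL^{p'} L^{q'}_x H)$ and to $\bL^{p'} L^{p'}_t L^{q'}_x H$. The pairings of $\zeta$ and of $\tilde\zeta$ against each such $\phi$ must both coincide with $\lim_n \ip{v_{\lambda_n}}{\phi}$, forcing equality as elements of $\bL^p L^p_t L^q_x H$ via the continuous inclusion $L^\infty(0,T; F) \embed \bL^p L^p_t L^q_x H$ (which is precisely the inequality used above with $T^{1/p}$). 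There is no substantive obstacle: the whole argument is a routine functional-analytic compactness extraction, the only minor technicality being the verification of the predual description of $L^\infty(0,T; F)$, which reduces to reflexivity and separability, both of which hold in the present setting.
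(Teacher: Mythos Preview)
Your proposal is correct and follows exactly the approach the paper intends: the paper states this corollary without proof, remarking only that ``the boundedness of $(v_\lambda)$ just established immediately implies the following compactness properties in suitable weak topologies.'' You have simply spelled out the routine functional-analytic details (reflexivity/Eberlein--\v{S}mulian for the weak limit, the predual identification $(L^1(0,T;F^*))^* = L^\infty(0,T;F)$ plus Banach--Alaoglu for the weak* limit, and identification of the two limits via a common dense class of test functions), all of which are standard and correctly handled.
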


Passing to the limit as $\lambda \to 0$ in \eqref{eq:vl}, recalling
that the linear operators $\phi \mapsto S \ast \phi$ and
$\Phi \mapsto S \diamond \Phi$ are continuous, hence also continuous
with respect to the corresponding weak topologies, it follows that
that $\zeta$ coincides with the (unique) solution $v$ to the equation
for formal derivatives obtained in the previous section. This also
shows that, under the smoothness assumption on $B$, $Du$ belongs to
$L^\infty(0,T;L^p(\Omega;L^q(G;H)))$, after modification on a subset
of $\Omega \times [0,T]$ of measure zero. We shall see later that
better regularity of $Du$ can be obtained.

\medskip

We are now going to show how the above compactness results imply
estimates on the first-order Malliavin derivative of $u$. We recall
that in \cite{cm:POTA21} the basic result $u(t,x) \in \bD^{1,p}_\loc$ for
every $p \geq 1$ and $(t,x) \in [0,T] \times G$ was proved.
\begin{thm}
  Let $r = p \wedge q$. Then $u(t,x) \in \bD^{1,r}$ for almost every
  $(t,x) \in [0,T] \times G$.
\end{thm}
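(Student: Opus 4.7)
The plan is to turn the integrated bound of Proposition~\ref{prop:vl} into a pointwise-in-$(t,x)$ bound on $Du_\lambda(t,x) = v_\lambda(t,x)$ in $L^r(\Omega;H)$, and then to invoke the closability of the Malliavin derivative on $L^r(\Omega)$. The assumption $r = p \wedge q > 1$ guarantees the reflexivity of $L^r(\Omega;H)$ that will power a weak-compactness step.

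First I would downgrade the bound of Proposition~\ref{prop:vl}. Since $r \leq q$ and $G$ is bounded, H\"older's inequality in $x$ gives $\norm{\cdot}_{L^r(G;H)} \lesssim \norm{\cdot}_{L^q(G;H)}$; since $r \leq p$ and $\Omega$ is a probability space, the $L^r(\Omega)$ norm is dominated by the $L^p(\Omega)$ norm. Combining these embeddings with Proposition~\ref{prop:vl} yields
\[
  \norm{v_\lambda}_{L^r(\Omega \times [0,T] \times G;H)}
  \lesssim T^{1/r} \norm{v_\lambda}_{\bL^p C([0,T];L^q(G;H))} \leq C
\]
uniformly in $\lambda > 0$. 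Fatou's lemma applied to the nonnegative function $(t,x) \mapsto \E\norm{v_\lambda(t,x)}_H^r$ then yields
\[
  \int_0^T \!\!\int_G \liminf_{\lambda \to 0} \E \norm{v_\lambda(t,x)}_H^r \, dx \, dt < \infty,
\]
so for a.e.\ $(t,x)$ there exists a subsequence $\lambda_k \downarrow 0$ (depending on $(t,x)$) along which $\{v_{\lambda_k}(t,x)\}$ stays bounded in $L^r(\Omega;H)$. Reflexivity then gives a further sub-subsequence with $v_{\lambda_{k_j}}(t,x) \rightharpoonup \eta$ weakly in $L^r(\Omega;H)$ for some $\eta$.

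To conclude I would invoke closability. Because $u_\lambda \to u$ in $L^p(\Omega;C([0,T];C(\overline{G})))$, one has $u_{\lambda_{k_j}}(t,x) \to u(t,x)$ in $L^p(\Omega) \embed L^r(\Omega)$, and by construction $Du_{\lambda_{k_j}}(t,x) = v_{\lambda_{k_j}}(t,x)$. The Malliavin derivative $D\colon \bD^{1,r} \subset L^r(\Omega) \to L^r(\Omega;H)$ is closed, and (via Mazur's theorem) any closed linear operator preserves strong--weak limits: combining the strong convergence $u_{\lambda_{k_j}}(t,x) \to u(t,x)$ with the weak convergence $Du_{\lambda_{k_j}}(t,x) \rightharpoonup \eta$ forces $u(t,x) \in \bD^{1,r}$ and $Du(t,x) = \eta$, for a.e.\ $(t,x) \in [0,T] \times G$.

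The main obstacle is the passage from the integrated bound to a pointwise-in-$(t,x)$ statement. Proposition~\ref{prop:vl} together with reflexivity only supplies a weak limit in the big product space $L^r(\Omega \times [0,T] \times G;H)$, which is not enough to conclude that a specific random variable $u(t,x)$ belongs to $\bD^{1,r}$. The Fatou-plus-subsequence argument circumvents this, but only at the cost of allowing the extracting subsequence to depend on $(t,x)$ and restricting the conclusion to a set of full measure, which is why the theorem states ``for almost every $(t,x)$''; removing this quantifier would presumably require a pathwise or uniform-in-$x$ refinement of Proposition~\ref{prop:vl} that is not available here.
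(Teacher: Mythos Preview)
Your argument is correct. The route differs from the paper's in one respect worth flagging. The paper first uses the Corollary preceding the theorem to get weak convergence $v_\lambda \to v$ in the product space $L^p(\Omega \times [0,T];L^q(G;H))$, then invokes Mazur's lemma \emph{in that big space} to produce convex combinations $z_n = D\widetilde u_n$ converging strongly, extracts a subsequence converging for a.e.\ $(t,x)$ in $L^r(\Omega;H)$, and finally feeds that into the (ordinary strong) closability of $D$. You instead stay with the raw uniform bound of Proposition~\ref{prop:vl}, localize via Fatou to obtain for a.e.\ fixed $(t,x)$ a $(t,x)$-dependent bounded subsequence in $L^r(\Omega;H)$, and then apply the strong--weak closedness of the graph of $D$ (itself a one-line consequence of Mazur applied to the closed convex graph). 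The paper's path has the side benefit of simultaneously identifying the limit as the process $v$ solving \eqref{eq:v}, which is used again later in the section; your path is a bit leaner for the theorem as stated and avoids the auxiliary convex combinations. One cosmetic point: when you write $\liminf_{\lambda\to 0}$ inside the integral, restrict to a countable sequence $\lambda_n \downarrow 0$ so that measurability of the $\liminf$ is not in doubt.
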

\begin{proof}
  Since $v_\lambda$ converges to $v$ weakly in $L^p(\Omega \times
  [0,T];L^q(G;H)$ as $\lambda \to 0$, Mazur's lemma implies that there
  exists a sequence $(z_n)$ defined by
  \[
    z_n := \sum_{k=n}^{N(n)} \alpha_{n,k} v_{\lambda_k},
    \qquad \text{with }
    \alpha_{n,k} \in \erre_+, \; \sum_{k=1}^{N(n)} \alpha_{n,k}=1,
  \]
  such that $z_n \to v$ strongly in $L^p(\Omega \times
  [0,T];L^q(G;H)$. Let $(\widetilde{u}_n)$ be the sequence defined by
  \[
  \widetilde{u}_n := \sum_{k=n}^{N(n)} \alpha_{n,k} u_{\lambda_k}.
  \]
  Then $\widetilde{u}_n$ converges to $u$ in
  $L^p(\Omega;C(0,T];C(\overline{G}))$ as $n \to \infty$, and
  $z_n(t,x) = D\widetilde{u}_n(t,x)$ for every $(t,x) \in [0,T] \times
  G$ by linearity of $D$. Let $r:=p \wedge q$. Then
  $\widetilde{u}_n(t,x)$ converges to $u(t,x)$ in $L^r(\Omega)$ for
  every $(t,x) \in [0,T] \times G$ and $z_n$ converges to $v$ in
  $L^r([0,T] \times G;L^r(\Omega;H))$, i.e.
  \[
  \lim_{n \to \infty} \int_0^T\!\!\int_G \E\norm[\big]{z_n(t,x)
    - v(t,x)}^r_H \,dx\,dt = 0,
  \]
  hence, passing to a subsequence is necessary,
  \[
  \lim_{n \to \infty} z_n(t,x) = v(t,x) \qquad \text{in } L^r(\Omega;H)
  \]
  for almost all $(t,x) \in G_T$. By the closability of $D$ it follows
  that $u(t,x) \in \mathbb{D}^{1,r}$ and $Du(t,x)=v(t,x)$ for almost
  all $(t,x) \in [0,T] \times G$.
\end{proof}

In fact both $p$ and $q$ can be taken as large as needed, hence we
actually have that, for almost every $(t,x) \in [0,T] \times G$,
$u(t,x) \in \bD^{1,r}$ for every $r \geq 0$.

\medskip

As a further step, we are going to show that $v$ is the limit in a
stronger topology of solutions to approximating equations. To this
purpose, however, we are not going to use \eqref{eq:vl}, but another
approximation of \eqref{eq:v}.
Let us set, for every $t \in [0,T]$, recalling that $F \leq 0$,
\[
  F_\lambda(t) := \frac{f'(u(t))}{1-\lambda f'(u(t))}
  = \frac{F(t)}{1 - \lambda F(t)}, \qquad \lambda>0,
\]
and consider the equation
\begin{equation}
  \label{eq:vbl}
  \overline{v}_\lambda(t) = v_0(t) 
  + \int_0^t S(t-s) F_\lambda(s) \overline{v}_\lambda(s)\,ds
  + \int_0^t S(t-s) \sigma'(u(s)) \overline{v}_\lambda(s) B\,dW(s),
\end{equation}
which is readily seen to admit a unique solution
$\overline{v}_\lambda$ in $L^p(\Omega;C([0,T];L^\infty(G;H)))$, as it
follows by boundedness of $F_\lambda$.

\begin{prop}
  The family of processes $(\overline{v}_\lambda)_{\lambda>0}$ is
  a Cauchy net in $L^p(\Omega;C([0,T];L^q(\overline{G})))$.
\end{prop}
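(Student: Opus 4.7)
The plan is to apply It\^o's formula for $\|\cdot\|^q_{L^q(G;H)}$ to the difference $w_{\lambda,\mu} := \overline{v}_\lambda - \overline{v}_\mu$ and to exploit the pointwise non-positivity of $F_\lambda$, following the template of the proof of Proposition~\ref{prop:vl}. Writing
\[
F_\lambda \overline{v}_\lambda - F_\mu \overline{v}_\mu = F_\lambda w_{\lambda,\mu} + (F_\lambda - F_\mu)\overline{v}_\mu,
\]
one sees that $w_{\lambda,\mu}$ is the mild solution, starting from zero, of the same type of linear SPDE as \eqref{eq:vbl}, with coefficient $F_\lambda$ in the drift, the same multiplicative diffusion coefficient $\sigma'(u)$, and an extra forcing term $(F_\lambda - F_\mu)\overline{v}_\mu$.

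Applying It\^o's formula (to a semimartingale approximation, as in Proposition~\ref{prop:vl}) gives an identity of the same type as \eqref{eq:Itoq}. The drift contribution from $F_\lambda w_{\lambda,\mu}$ reduces to $q\|w_{\lambda,\mu}\|^{q-2}_{L^q_xH}\int_G F_\lambda(x)\|w_{\lambda,\mu}(x)\|_H^q\,dx \leq 0$, since $F = f'(u)\leq 0$ gives $F_\lambda = F/(1-\lambda F) \leq 0$; this monotonicity kills the only term whose naive control would blow up as $\lambda \to 0$. The contribution of the extra forcing is estimated, using $\|J(w)\|_{L^{q'}_xH}=\|w\|_{L^q_xH}$, H\"older and Young inequalities, in the form
\[
q\|w_{\lambda,\mu}\|^{q-2}_{L^q_xH}\,\abs[\big]{\ip[\big]{(F_\lambda-F_\mu)\overline{v}_\mu}{J(w_{\lambda,\mu})}} \leq C\|w_{\lambda,\mu}\|^q_{L^q_xH} + C\|(F_\lambda-F_\mu)\overline{v}_\mu\|^q_{L^q_xH}.
\]
The stochastic integral and its quadratic variation are handled exactly as in Proposition~\ref{prop:vl}, via BDG and the ideal property of $B \in \gamma(U;X)$, $X\embed L^\infty$. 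The same small-time-interval iteration then produces
\[
\|w_{\lambda,\mu}\|_{\bL^p C_t L^q_xH} \lesssim \|(F_\lambda-F_\mu)\overline{v}_\mu\|_{\bL^p L^q_t L^q_xH} + \|(F_\lambda-F_\mu)\overline{v}_\mu\|_{\bL^p L^{2q}_t L^q_xH},
\]
with implicit constants independent of $\lambda,\mu>0$.

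To conclude, I would show that the right-hand side vanishes as $\lambda,\mu \to 0$, using three ingredients: (a) $F_\lambda \to F$ pointwise on $\Omega\times[0,T]\times G$, with $|F_\lambda|\leq|F|$, both immediate from $F_\lambda=F/(1-\lambda F)$ and $F\leq 0$; (b) $\|F\|_{L^\infty_{t,x}} \in \bL^s$ for every $s\geq 1$, by Proposition~\ref{prop:0} and $|f'(y)|\lesssim 1+|y|^m$; and (c) a uniform bound on $(\overline{v}_\mu)_\mu$ in $\bL^{s} C_t L^q_xH$ for every $s$, obtained by repeating verbatim the argument of Proposition~\ref{prop:vl} (the dissipativity $F_\lambda\leq 0$ is the only structural input used from the drift side). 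Then the pointwise estimate $\|(F_\lambda-F_\mu)\overline{v}_\mu\|_{L^q_xH} \leq \|F_\lambda-F_\mu\|_{L^\infty_x}\|\overline{v}_\mu\|_{L^q_xH}$, combined with pathwise bounded convergence $\|F_\lambda-F_\mu\|_{L^\infty_x}\to 0$ (dominated by $2\|F\|_{L^\infty_x}$), H\"older in $\Omega\times[0,T]$, and the uniform bound in (c), yields the Cauchy property by dominated convergence. The main obstacle is the bookkeeping of integrability exponents in the nested $\bL^s L^r_t L^q_xH$ norms, so that a single integrable dominating function controls both factors of the product; this is handled by taking the integrability of $\overline{v}_\mu$ slightly larger than the one of $w_{\lambda,\mu}$, which is permitted since $p$ may be chosen arbitrarily large.
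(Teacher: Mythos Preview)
Your argument is correct and follows the same overall template as the paper (It\^o's formula for $\|\cdot\|^q_{L^q_xH}$ applied to the difference, monotonicity to kill the dangerous drift contribution, BDG for the martingale, small-time iteration), but the algebraic handling of the drift term is genuinely different.

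The paper does not split $F_\lambda\overline{v}_\lambda - F_\mu\overline{v}_\mu$ as $F_\lambda w_{\lambda,\mu} + (F_\lambda-F_\mu)\overline{v}_\mu$. Instead it introduces the resolvent $J^F_\lambda := (1-\lambda F)^{-1}$, writes $F_\lambda = F J^F_\lambda$, and uses the identity
\[
\overline{v}_\lambda - \overline{v}_\mu
= \bigl(J^F_\lambda \overline{v}_\lambda - J^F_\mu \overline{v}_\mu\bigr)
+ \bigl(\lambda F_\lambda \overline{v}_\lambda - \mu F_\mu \overline{v}_\mu\bigr),
\]
so that the monotone part is $\int_G F\,\|J^F_\lambda\overline{v}_\lambda - J^F_\mu\overline{v}_\mu\|^2_H\,\|\overline{v}_\lambda-\overline{v}_\mu\|^{q-2}_H \leq 0$ and the remainder carries an explicit prefactor $(\lambda+\mu)$ times $\|F\|_{L^\infty_x}(\|\overline{v}_\lambda\|_{L^q_xH}+\|\overline{v}_\mu\|_{L^q_xH})^q$. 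This is the classical Yosida-approximation trick from monotone operator theory and yields a quantitative rate $(\lambda+\mu)^{1/q}$ directly. Your decomposition is more elementary: you use $F_\lambda\leq 0$ on the nose to make $\ip{F_\lambda w_{\lambda,\mu}}{J(w_{\lambda,\mu})}\leq 0$, and then appeal to dominated convergence for $(F_\lambda-F_\mu)\overline{v}_\mu$. In fact your route can be made quantitative too, since $|F_\lambda - F| = \lambda F^2/(1-\lambda F) \leq \lambda F^2$ gives $\|F_\lambda-F_\mu\|_{L^\infty_{t,x}}\leq (\lambda+\mu)\|F\|^2_{L^\infty_{t,x}}$, recovering the same rate without the resolvent identity. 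The price you pay is the asymmetry in $\lambda,\mu$ (the forcing involves $\overline{v}_\mu$), which forces you to invoke the uniform bound (c); the paper's symmetric decomposition needs that same uniform bound anyway, so nothing is lost.
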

\begin{proof}
  It clearly holds
  \begin{align*}
    \overline{v}_\lambda(t) - \overline{v}_\mu(t) &= \int_0^t S(t-s)
    \bigl( F_\lambda(s) \overline{v}_\lambda(s)
    - F_\mu(s) \overline{v}_\mu(s) \bigr)\,ds\\
    &\quad + \int_0^t S(t-s) \sigma'(u(s)) \bigl(
    \overline{v}_\lambda(s) - \overline{v}_\mu(s) \bigr) B\,dW(s),
  \end{align*}
  i.e. $\overline{v}_\lambda - \overline{v}_\mu$ is the unique mild
  solution to the differential equation
  \[
  d(\overline{v}_\lambda - \overline{v}_\mu) 
  + A(\overline{v}_\lambda - \overline{v}_\mu)\,dt
  = (F_\lambda \overline{v}_\lambda - F_\mu \overline{v}_\mu)\,dt
  + \sigma'(u) ( \overline{v}_\lambda - \overline{v}_\mu ) B\,dW
  \]
  with zero initial condition. We are going to obtain estimates on the
  difference $\overline{v}_\lambda - \overline{v}_\mu$ applying
  It\^o's formula, which is formal but harmless, as already mentioned.
  In the following, $\ip{\cdot}{\cdot}$, without subscripts, stands
  for duality pairing $\ip{\cdot}{\cdot}$ induced by the scalar
  product of $L^2(G;H)$. One has
  \begin{equation}
  \label{eq:vblm}
  \begin{split}
    &\norm[\big]{\overline{v}_\lambda(t) - \overline{v}_\mu(t)}_{L^q_xH}^q
    + \int_0^t \ip[\big]{A(\overline{v}_\lambda - \overline{v}_\mu)}%
    {\Phi'_q(\overline{v}_\lambda - \overline{v}_\mu)}\,ds\\
    &\hspace{3em} \leq \int_0^t \ip[\big]{F_\lambda\overline{v}_\lambda%
      - F_\mu\overline{v}_\mu}{\Phi'_q(\overline{v}_\lambda%
      - \overline{v}_\mu)}\,ds\\
    &\hspace{3em}\quad + \int_0^t \Phi'_q(\overline{v}_\lambda%
      - \overline{v}_\mu) \sigma'(u) (\overline{v}_\lambda%
      - \overline{v}_\mu) B\,dW(s)\\
    &\hspace{3em}\quad + \frac12 q(q-1) \int_0^t
    \norm[\big]{\sigma'(u)(\overline{v}_\lambda - \overline{v}_\mu)
      B}^2_{\gamma(U;L^q_xH)}
    \norm[\big]{\overline{v}_\lambda - \overline{v}_\mu}^{q-2}_{L^q_xH}\,ds.
  \end{split}
  \end{equation}
  To estimate the first term on the right-hand side, let us set
  \[
  J^F_\lambda := \frac{1}{1-\lambda F},
  \]
  so that $F_\lambda = FJ_\lambda$ and
  \begin{gather*}
  F_\lambda\overline{v}_\lambda - F_\mu\overline{v}_\mu =
  FJ^F_\lambda\overline{v}_\lambda - FJ^F_\mu\overline{v}_\mu,\\
  \begin{split}
    \overline{v}_\lambda - \overline{v}_\mu
    &= J^F_\lambda \overline{v}_\lambda - J^F_\mu \overline{v}_\mu
    + \overline{v}_\lambda - J^F_\lambda \overline{v}_\lambda
    - \bigl( \overline{v}_\mu - J^F_\mu \overline{v}_\mu \bigr),\\
    &= J^F_\lambda \overline{v}_\lambda - J^F_\mu \overline{v}_\mu
    + \lambda F_\lambda \overline{v}_\lambda - \mu F_\mu \overline{v}_\mu.
  \end{split}
  \end{gather*}
  Then, recalling that $\Phi'_q(\overline{v}_\lambda -
  \overline{v}_\mu) = q \norm{\overline{v}_\lambda -
    \overline{v}_\mu}^{q-2}_H (\overline{v}_\lambda -
  \overline{v}_\mu)$,
  \begin{align*}
  &\ip[\big]{F_\lambda\overline{v}_\lambda - F_\mu\overline{v}_\mu}%
    {\Phi'_q(\overline{v}_\lambda - \overline{v}_\mu)}\\
  &\hspace{3em}= q \ip[\big]{FJ^F_\lambda\overline{v}_\lambda %
    - FJ^F_\mu\overline{v}_\mu}{\norm{\overline{v}_\lambda %
    - \overline{v}_\mu}^{q-2}_H (J^F_\lambda \overline{v}_\lambda %
    - J^F_\mu \overline{v}_\mu)}\\
  &\hspace{3em}\quad + q \ip[\big]{F_\lambda\overline{v}_\lambda %
    - F_\mu\overline{v}_\mu}{\norm{\overline{v}_\lambda %
    - \overline{v}_\mu}^{q-2}_H \lambda F_\lambda \overline{v}_\lambda %
    - \mu F_\mu \overline{v}_\mu},
  \end{align*}
  where, as $F \leq 0$,
  \begin{align*}
  &\ip[\big]{FJ^F_\lambda\overline{v}_\lambda %
    - FJ^F_\mu\overline{v}_\mu}{\norm{\overline{v}_\lambda %
    - \overline{v}_\mu}^{q-2}_H (J^F_\lambda \overline{v}_\lambda %
    - J^F_\mu \overline{v}_\mu)}\\
  &\hspace{3em} = \int_G \norm{\overline{v}_\lambda %
    - \overline{v}_\mu}^{q-2}_H \ip[\big]{FJ^F_\lambda\overline{v}_\lambda %
    - FJ^F_\mu\overline{v}_\mu}{J^F_\lambda \overline{v}_\lambda %
    - J^F_\mu \overline{v}_\mu}_H\\
  &\hspace{3em} = \int_G F \norm[\big]{J^F_\lambda \overline{v}_\lambda %
    - J^F_\mu \overline{v}_\mu}^2_H %
    \norm[\big]{\overline{v}_\lambda - \overline{v}_\mu}^{q-2}_H \leq 0,
  \end{align*}
  and
  \begin{align*}
  &\ip[\big]{F_\lambda\overline{v}_\lambda %
    - F_\mu\overline{v}_\mu}{\norm{\overline{v}_\lambda %
    - \overline{v}_\mu}^{q-2}_H \lambda F_\lambda \overline{v}_\lambda %
    - \mu F_\mu \overline{v}_\mu}\\
  &\hspace{3em} = \int_G \norm{\overline{v}_\lambda %
    - \overline{v}_\mu}^{q-2}_H \ip[\big]{F_\lambda\overline{v}_\lambda %
    - F_\mu\overline{v}_\mu}{\lambda F_\lambda \overline{v}_\lambda %
    - \mu F_\mu \overline{v}_\mu}_H\\
  &\hspace{3em} \lesssim (\lambda+\mu) \int_G \norm{\overline{v}_\lambda %
    - \overline{v}_\mu}^{q-2}_H \bigl( \abs{F_\lambda}
    \norm{\overline{v}_\lambda}_H^2 + \abs{F_\mu} \norm{\overline{v}_\mu}_H^2
    \bigr)\\
  &\hspace{3em} \leq (\lambda+\mu) \int_G \abs{F} \bigl( %
    \norm{\overline{v}_\lambda}_H + \norm{\overline{v}_\mu}_H \bigr)^q\\
  &\hspace{3em} \leq (\lambda+\mu) \norm[\big]{F}_{L^\infty_x} \bigl(
    \norm{\overline{v}_\lambda}_{L^q_xH} + \norm{\overline{v}_\mu}_{L^q_xH}
    \bigr)^q.
  \end{align*}
  Therefore
  \[
  \int_0^t \ip[\big]{F_\lambda\overline{v}_\lambda%
    - F_\mu\overline{v}_\mu}{\Phi'_q(\overline{v}_\lambda%
    - \overline{v}_\mu)}\,ds \lesssim (\lambda+\mu) q \int_0^t
  \norm[\big]{F}_{L^\infty_x} \bigl(
  \norm{\overline{v}_\lambda}_{L^q_xH} +
  \norm{\overline{v}_\mu}_{L^q_xH} \bigr)^q\,ds
  \]
  and
  \begin{align*}
  &\abs[\bigg]{\int_0^t \ip[\big]{F_\lambda\overline{v}_\lambda%
    - F_\mu\overline{v}_\mu}{\Phi'_q(\overline{v}_\lambda%
    - \overline{v}_\mu)}\,ds}^{1/q}\\
  &\hspace{3em} \lesssim (\lambda+\mu)^{1/q} q^{1/q} \norm[\Big]{%
     \norm[\big]{F}^{1/q}_{L^\infty_x} \bigl(
     \norm{\overline{v}_\lambda}_{L^q_xH} +
     \norm{\overline{v}_\mu}_{L^q_xH} \bigr)}_{L^q(0,t)}.
  \end{align*}
  The remaining terms on the right-hand side of \eqref{eq:vblm} can be
  estimated similarly to the corresponding estimates in the proof of
  Proposition~\ref{prop:vl}. In particular, one has
  \begin{align*}
  &\frac12 q(q-1) \int_0^t
    \norm[\big]{\sigma'(u)(\overline{v}_\lambda - \overline{v}_\mu)
      B}^2_{\gamma(U;L^q_xH)}
    \norm[\big]{\overline{v}_\lambda - \overline{v}_\mu}^{q-2}\,ds\\
  &\hspace{3em} \leq \frac12 q(q-1) L_\sigma^2
    \norm[\big]{B}^2_{\gamma(U;X)} \int_0^t
    \norm[\big]{\overline{v}_\lambda - \overline{v}_\mu}^q_{L^q_xH}\,ds,
  \end{align*}
  hence the third term on the right-hand side of \eqref{eq:vblm}
  raised to power $1/q$ is dominated by
  \[
  \bigl( q(q-1)/2)^{1/q} L_\sigma^{2/q}
  \norm[\big]{B}^{2/q}_{\gamma(U;X)} \norm[\big]{\overline{v}_\lambda
    - \overline{v}_\mu}_{L^q(0,t;L^q_xH)}.
  \]
  Denoting by $M$ the stochastic integral on the right-hand side of
  \eqref{eq:vblm}, one has
  \[
  \norm[\big]{(M_T^*)^{1/q}}_{L^p(\Omega)} 
  = \norm[\big]{(M_T^*)}^{1/q}_{L^{p/q}(\Omega)} 
  \lesssim q^{1/q} \norm[\big]{\overline{v}_\lambda 
    - \overline{v}_\mu}_{L^p(\Omega;L^{2q}(0,t;L^q_xH))},
  \]
  with an implicit constant depending on the norm of $B$ in
  $\gamma(U;X)$ and $p$, among others. We are left with
  \begin{align*}
  \norm[\big]{\overline{v}_\lambda - \overline{v}_\mu}_{L^p(\Omega;C([0,t];L^q_xH))}
  &\lesssim (\lambda+\mu)^{p/q} \norm[\Big]{%
     \norm[\big]{F}^{1/q}_{L^\infty_x} \bigl(
     \norm{\overline{v}_\lambda}_{L^q_xH} +
     \norm{\overline{v}_\mu}_{L^q_xH} \bigr)}_{L^p(\Omega;L^q(0,t))}\\
  &\quad + \norm[\big]{\overline{v}_\lambda 
    - \overline{v}_\mu}_{L^p(\Omega;L^{2q}(0,t;L^q_xH))}
    + \norm[\big]{\overline{v}_\lambda 
    - \overline{v}_\mu}_{L^p(\Omega;L^q(0,t;L^q_xH))},
  \end{align*}
  so that, for $T_0$ sufficiently small,
  \[
  \norm[\big]{\overline{v}_\lambda - \overline{v}_\mu}_{L^p(\Omega;C([0,T_0];L^q_xH))}
  \lesssim (\lambda+\mu)^{p/q} \norm[\Big]{%
     \norm[\big]{F}^{1/q}_{L^\infty_x} \bigl(
     \norm{\overline{v}_\lambda}_{L^q_xH} +
     \norm{\overline{v}_\mu}_{L^q_xH} \bigr)}_{L^p(\Omega;L^q(0,T_0))}.
  \]
  Iterating this reasoning over intervals of length $T_0$ covering
  $[0,T]$, we reach the conclusion that $(\overline{v}_\lambda)$ is a
  Cauchy net in $L^p(\Omega;C([0,T_0];L^q(G;H)))$ if
  \[
  \norm[\Big]{\norm[\big]{F}^{1/q}_{L^\infty_x}
    \norm{\overline{v}_\lambda}_{L^q_xH}}_{L^p(\Omega;L^q(0,T))}
  \]
  is bounded uniformly with respect to $\lambda$. From
  \[
  \norm[\Big]{\norm[\big]{F}^{1/q}_{L^\infty_x}
    \norm{\overline{v}_\lambda}_{L^q_xH}}_{L^q(0,T)} \leq
  \norm[\big]{F}^{1/q}_{L^\infty_{t,x}}
  \norm[\big]{\overline{v}_\lambda}_{L^q_{t,x}H},
  \]
  it follows that, for any $r,s > p$ such that $1/p = 1/r + 1/s$,
  \[
  \norm[\Big]{\norm[\big]{F}^{1/q}_{L^\infty_x}
    \norm{\overline{v}_\lambda}_{L^q_xH}}_{\bL^p L^q_t} \leq
  \norm[\big]{\overline{v}_\lambda}_{\bL^r L^q_{t,x}H}
  \norm[\big]{F}^{1/q}_{\bL^{s/q} L^\infty_{t,x}}.
  \]
  As already seen, $F = f(u)$ belongs to
  $L^p(\Omega;C([0,T;C(\overline{G})))$ for every $p>0$, hence one
  only has to show that $(\overline{v}_\lambda)$ is bounded in
  $L^r(\Omega;L^q(0,T;L^q(G;H)))$ for some $r>p$. But this can be
  obtained exactly as in the proof of Proposition~\ref{prop:vl}.
\end{proof}

The Cauchy property just proved, coupled with the regularizing
properties of the semigroup $S$, allow to obtain strong regularity
properties of the process $Du$.
\begin{thm}
  The process $Du$ belongs to $L^p(\Omega;C([0,T];L^\infty(G;H)))$ for
  every $p>0$, after modification on a subset of $\Omega \times [0,T]$
  of measure zero.
\end{thm}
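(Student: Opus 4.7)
The plan is in two stages: first identify the strong limit of the Cauchy net $(\overline{v}_\lambda)$ with $v=Du$, then bootstrap the spatial regularity from $L^q(G;H)$ to $L^\infty(G;H)$ by feeding this regularity back into the mild equation \eqref{eq:v} and applying the maximal estimates of Lemmas \ref{lm:DPKZ} and \ref{lm:msc}.

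For the first stage, let $\overline{v} \in \bL^p C_t L^q_xH$ denote the limit of $(\overline{v}_\lambda)$ provided by the previous proposition. Since $F \leq 0$, one has $\abs{F_\lambda}\leq\abs{F}$ pointwise, with $F \in \bL^r C_{t,x}$ for every $r$, and $F_\lambda \to F$ pointwise in $\Omega \times [0,T] \times G$. Combined with the strong convergence $\overline{v}_\lambda \to \overline{v}$ in $\bL^p C_t L^q_xH$, dominated convergence gives $F_\lambda \overline{v}_\lambda \to F \overline{v}$ in $\bL^p L^q_{t,x}H$; together with continuity of $g \mapsto S \ast g$ and $G \mapsto S \diamond G$, each term of \eqref{eq:vbl} converges to the corresponding term of \eqref{eq:v}. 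Uniqueness of the mild solution to \eqref{eq:v} in the class where $v$ was constructed, together with the identification $v=Du$ stated in the corollary above, then yields $\overline{v} = v = Du$, so that $Du \in \bL^p C_t L^q_xH$ for every $p>0$.

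For the bootstrap, I would fix $\eta \in \mathopen]d/(2q),1/2-1/p\mathclose[$, which is nonempty by \eqref{eq:dqp}, so that the Sobolev embedding $E^q_\eta(H) \embed L^\infty(G;H)$ holds. Re-reading \eqref{eq:v} as a mild identity for the $L^q(G;H)$-valued process $Du$, the initial term $v_0$ is already bounded in $\bL^p L^\infty_{t,x}H$ by the estimate given just before Proposition~\ref{prop:vl}. For the deterministic convolution I would use the polynomial growth of $f'$ together with $u \in \bL^r C_{t,x}$ for every $r$ to place $f'(u)Du \in \bL^p L^r_t L^q_xH$ for $r$ large enough that $\eta r' < 1$, via H\"older's inequality in $(\omega,t)$; Lemma~\ref{lm:DPKZ} with target space $L^q(G;H)$ then controls $S \ast (f'(u)Du)$ in $\bL^p C_t E^q_\eta(H) \embed \bL^p C_t L^\infty_xH$. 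For the stochastic convolution I would apply Lemma~\ref{lm:msc} in the UMD space $L^q(G;H)$ with $\alpha \in \mathopen]\eta+1/p,1/2\mathclose[$, using the ideal property $\norm{\sigma'(u)DuB}_{\gamma(U;L^q_xH)} \lesssim L_\sigma \norm{B}_{\gamma(U;X)} \norm{Du}_{L^q_xH}$ to reduce the right-hand side to the already-controlled $\bL^p L^p_t L^q_xH$-norm of $Du$.

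The main obstacle will be reconciling the three exponent constraints simultaneously: the Sobolev threshold $\eta > d/(2q)$, the integrability window $\eta r' < 1$ required by Lemma~\ref{lm:DPKZ}, and $\eta < \alpha - 1/p$ with $\alpha < 1/2$ required by Lemma~\ref{lm:msc}. This is exactly what hypothesis \eqref{eq:dqp} is tailored to afford, once $r$ is taken large enough. Closability of $D$ and the identification performed in the first stage then ensure that the pointwise-defined $Du(t,x)$ coincides, off a $\P \otimes dt$-null set, with the $L^\infty(G;H)$-valued modification produced by the bootstrap.
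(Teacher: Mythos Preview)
Your proposal is correct and follows essentially the same route as the paper: first identify the strong limit of $(\overline{v}_\lambda)$ with $v=Du$ by passing to the limit in \eqref{eq:vbl}, then bootstrap from $L^q_xH$ to $L^\infty_xH$ by applying Lemma~\ref{lm:DPKZ} to the deterministic convolution and Lemma~\ref{lm:msc} to the stochastic one, with $\eta$ chosen in the window afforded by \eqref{eq:dqp}. The only cosmetic difference is that the paper fixes the specific exponent $r$ with $1-1/r = 1/2 - 1/p$ for the deterministic convolution, whereas you take $r$ ``large enough'', and the paper bounds the stochastic term via the $C_tL^q_xH$-norm of $v$ rather than the $L^p_tL^q_xH$-norm; neither changes the substance of the argument.
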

\begin{proof}
  Let $\zeta$ be the (strong) limit in $L^p(\Omega;C([0,T];L^q(G;H)))$
  of the Cauchy sequence $(\overline{v}_\lambda)$. Passing to the
  limit as $\lambda to 0$ in \eqref{eq:vbl}, one easily sees that
  $\zeta$ solves \eqref{eq:v}, hence $\zeta=v=Du$. We are going to
  improve the regularity of $v$ using the regularizing properties of
  $S$. In particular, Lemma~\ref{lm:DPKZ} yields
  \[
    \norm[\big]{S \ast Fv}_{C([0,T];E^q_\eta(H)} \lesssim
    T^\varepsilon \norm[\big]{Fv}_{L^r(0,T;L^q(H)}
  \]
  for every $r>1$ and $0 \leq \eta < 1-1/r$, with $\varepsilon$ a
  positive constant. Therefore, taking $r=2p/(p+2)$, so that
  $1/r=1/2-1/p$, and $\eta>d/(2q)$, one has
  \[
    \norm[\big]{S \ast Fv}_{\bL^p C_t L^\infty_xH}
    \lesssim_T \norm[\big]{Fv}_{L^p(\Omega;L^r(0,T;L^q(G;H))},
  \]
  where
  \[
    \norm[\big]{Fv}_{L^r_t L^q_xH} \leq \norm[\big]{F}_{L_{t,x}^\infty}
    \norm[\big]{v}_{L^r_t L^q_xH}.
  \]
  Let $s,s' \in \erre_+$ be such that $1/p=1/s+1/s'$. Then
  \[
    \norm[\big]{S \ast (Fv)}_{\bL^p C_t L^\infty_xH} \lesssim_T
    \norm[\big]{F}_{\bL^{s'} L_{t,x}^\infty}
    \norm[\big]{v}_{\bL^s L^r_t L^q_xH},
  \]
  where, as already mentioned several times, both norms on the
  right-hand side are finite.
  The analogous estimate for the stochastic convolution term is
  similar (actually a bit simpler, as $\sigma'(u)$ is bounded): by
  Lemma~\ref{lm:msc}, taking $\alpha<1/2$ such that $\eta<\alpha-1/p$,
  one has
  \[
    \norm[\big]{S \diamond \sigma'(u) v B}_{\bL^p C_t E^q_\eta(H)}
    \lesssim_{p,T} \biggl(\int_0^T
    \E\norm[\big]{(t-\cdot)^{-\alpha}%
      \sigma'(u) v B}^p_{\gamma(L^2(0,t;U);L^q_xH)}\,dt \biggr)^{1/p},
  \]
  where
  \begin{align*}
    \norm[\big]{(t-\cdot)^{-\alpha} \sigma'(u) v B}_{\gamma(L^2(0,t;U);L^q_xH)}
    &\leq \norm[\big]{(t-\cdot)^{-\alpha} \sigma'(u) %
      v B}_{L^2(0,t;\gamma(U;L^q_xH))}\\
    &\leq L_\sigma \norm{B}_{\gamma(U;X)} \biggl( \int_0^t (t-s)^{-2\alpha}
      \norm{v(s)}^2_{L^q_xH}\,ds \biggr)^{1/2}\\
    &\lesssim_T L_\sigma \norm{B}_{\gamma(U;X)} \norm{v}_{C_tL^q_xH},
  \end{align*}
  hence
  \[
    \norm[\big]{S \diamond \sigma'(u) v B}_{\bL^p C_t L^\infty_xH}
    \lesssim_{p,T} L_\sigma \norm{B}_{\gamma(U;X)}
    \norm[\big]{v}_{\bL^p C_t L^q_xH},
  \]
  where the right-hand side is certainly finite.
\end{proof}


\section{Estimates with positivity-preserving covariance}
Recall that $L^2_Q$ is the completion of $L^2(G)$ with respect to the
norm ${\norm{\cdot}}_Q := {\norm{Q^{1/2}f}}_{L^2}$. Throughout this
section we shall assume that the bounded operator $Q$ on $L^2$ is
positivity preserving, i.e. that $Qg \geq 0$ if $g \in L^2$,
$g \geq 0$. Moreover, without loss of generality, we assume that
$\sigma \geq 0$.

Let $f_\lambda$ be the Yosida approximation of $f$. As already seen,
the Lipschitz continuity of $f_\lambda$ implies that
\[
  du_\lambda + Au_\lambda\,dt = f_\lambda(u_\lambda)\,dt + \sigma(u_\lambda)B\,dW,
  \qquad u_\lambda(0)=u_0 \in C(\overline{G})
\]
admits a unique mild solution
$u_\lambda \in L^p(\Omega;C([0,T];C(\overline{G})))$
for all $p \geq 1$, and $v_\lambda:=Du_\lambda$ satisfies \eqref{eq:vl}.

Let us introduce the auxiliary equation
\[
  y_\lambda(t) = v_{0,\lambda}(t)
  + \int_0^t S(t-s) \sigma'(u_\lambda(s)) y_\lambda(s) B\,dW(s).
\]
Both this equation and \eqref{eq:vl} are well-posed in
$L^p(\Omega;C([0,T];L^\infty(G;H)))$.

The following comparison result is the main tool to achieve
boundedness of the collection $(v_\lambda)$.
\begin{prop}
  \label{prop:vlyl}
  One has $\norm{v_\lambda(t,x)}_H \leq \norm{y_\lambda(t,x)}_H$ for
  almost all $(t,x) \in [0,T] \times G$.
\end{prop}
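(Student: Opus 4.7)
The plan is to reduce the $H$-norm inequality to a pointwise comparison of real-valued mild solutions indexed by the Malliavin parameter $(\tau,z) \in [0,T] \times G$, and then to transfer that comparison to $H$-norms using the positivity of $Q$. Writing $V(t,x;\tau,z)$ and $Y(t,x;\tau,z)$ for (representatives of) the Malliavin derivatives $v_\lambda(t,x),y_\lambda(t,x) \in H$ evaluated at $(\tau,z)$, the $H$-valued equations unpack, for each fixed $(\tau,z)$ and $t \geq \tau$, into scalar mild identities on $[\tau,T] \times G$ with common nonnegative initial datum $K_{t-\tau}(x,z)\,\sigma(u_\lambda(\tau,z))$ (nonnegative because $K \geq 0$ and $\sigma \geq 0$); the equation for $V$ carries the additional drift $f'_\lambda(u_\lambda) V$, while that for $Y$ does not.

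The key step is to establish $0 \leq V(t,x;\tau,z) \leq Y(t,x;\tau,z)$ a.e.\ via a positivity-preservation and comparison principle for these linear parabolic SPDEs. For positivity of $V$ and $Y$, the relevant structural features are that $K \geq 0$, $Q$ is positivity-preserving by hypothesis, the noise coefficient $\sigma'(u_\lambda) V B$ vanishes at $V = 0$, and the additional drift $f'_\lambda(u_\lambda) V$ has the favorable sign $\leq 0$ on $\{V \geq 0\}$ (since $f'_\lambda \leq 0$). Concretely, one applies It\^o's formula to a smooth convex approximation of $y \mapsto y_-^2$ after Yosida-regularizing $A$, uses dissipativity (so that the $A$-term yields a nonpositive contribution) together with the positivity-preservation of $Q$ to control the It\^o correction, and invokes Gronwall to conclude $\E \int_G V_-^2(t,x)\,dx = 0$; the same argument yields $Y \geq 0$. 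Setting $W := Y - V$, the process $W$ solves a linear SPDE with zero initial datum, nonnegative source $-f'_\lambda(u_\lambda) V \geq 0$, and multiplicative noise vanishing at $W = 0$, so an identical argument gives $W \geq 0$, i.e.\ $V \leq Y$.

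Finally, since $\norm{v_\lambda(t,x)}_H^2 = \int_0^T \ip{QV(t,x;\tau,\cdot)}{V(t,x;\tau,\cdot)}_{L^2(G)}\,d\tau$ and the positivity-preservation of $Q$ makes $f \mapsto \ip{Qf}{f}$ monotone on the nonnegative cone (via pointwise comparison of the bilinear integrand $Q(z,z')f(z)f(z')$ against $Q(z,z')g(z)g(z')$ whenever $0\leq f \leq g$), the inequality $0 \leq V \leq Y$ yields $\ip{QV}{V} \leq \ip{QY}{Y}$ at each $\tau$, and integration in $\tau$ gives the claim for a.e.\ $(t,x)$. The main obstacle will be (i) giving a rigorous pointwise-in-$(\tau,z)$ interpretation of the Malliavin derivatives $v_\lambda(t,x)$ — for this, the extra smoothness coming from the Lipschitz approximation $u_\lambda$ should suffice — and (ii) justifying the scalar positivity-preservation principle for SPDEs with multiplicative colored noise of the form $\sigma'(u) V B\,dW$, which should proceed along the lines of the analogous argument used in \cite{cm:POTA13}.
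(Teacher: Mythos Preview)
Your overall strategy---reduce to a scalar comparison via a positivity-preservation principle, then transfer back to $H$-norms using the positivity of $Q$---matches the paper's, but the slicing is different. You evaluate $v_\lambda(t,x)\in H$ pointwise at the Malliavin parameter $(\tau,z)$; the paper instead pairs with test vectors $h\in H$ satisfying $Qh\geq 0$, obtaining real-valued processes $v_\lambda^h:=\ip{v_\lambda}{h}_H$ and $y_\lambda^h:=\ip{y_\lambda}{h}_H$ that solve genuine scalar SPDEs with nonnegative forcing $\sigma(u_\lambda)Qh$. The advantage of this dual slicing is that $\ip{\cdot}{h}_H$ is a \emph{continuous} linear functional, so commuting it with the $H$-valued stochastic integral is automatic and your obstacle~(i) simply disappears; point evaluation on $H=L^2(0,T;L^2_Q)$ is not continuous, and making the pointwise-in-$(\tau,z)$ equation rigorous would require extra work. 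Your obstacle~(ii) is real in both approaches: the paper resolves it by first assuming $B\in\gamma(U;X)$ with $X\hookrightarrow L^\infty$, applying a maximum principle from \cite{cm:pos1} to get $0\leq v_\lambda^h\leq y_\lambda^h$, and then removing the smoothness by approximating $B$ with $B_\varepsilon=(I+\varepsilon A)^{-\alpha}B$ and using continuity of the solution map in $\gamma(U;L^q)$. The final passage to $H$-norms is essentially your monotonicity-on-the-cone argument in disguise: from $\ip{v_\lambda}{h}_H\geq 0$ for all $h\geq 0$ in $L^2(G_T)$ the paper deduces $Qv_\lambda\geq 0$ and $Qy_\lambda\geq 0$, then takes $h=v_\lambda$ and $h=y_\lambda$ in the directional inequality to chain $\norm{v_\lambda}_H^2\leq\ip{y_\lambda}{v_\lambda}_H\leq\norm{y_\lambda}_H^2$.
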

We proceed in several steps.
\begin{lemma}
  Let $h \in H$ be such that $Qh \geq 0$. Then $\ip{v_{0\lambda}}{h}
  \geq 0$.
\end{lemma}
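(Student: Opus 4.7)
The plan is to simply unfold the definitions of the inner product on $H=L^2(0,T;L^2_Q)$ and of $v_{0,\lambda}$, and to observe that the integrand obtained is manifestly nonnegative under the standing hypotheses of this section (namely $\sigma\geq 0$ together with $Q$ being positivity-preserving, plus the Markovianity of $S$). The claim is really a pointwise statement in $(t,x)$, so I will fix $(t,x)\in[0,T]\times G$ and bound $\ip{v_{0,\lambda}(t,x)}{h}_H$.

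First I would write, for $\phi,\psi\in L^2_Q$ (using density of $L^2$ and the identity $\ip{\phi}{\psi}_Q=\ip{Q^{1/2}\phi}{Q^{1/2}\psi}_{L^2}$ together with the symmetry of $Q^{1/2}$),
\[
  \ip{\phi}{\psi}_Q = \ip{\phi}{Q\psi}_{L^2(G)},
\]
which is meaningful because $Q\psi=Q^{1/2}(Q^{1/2}\psi)\in L^2(G)$. Applied pointwise in the time variable and integrated over $[0,T]$, this gives
\[
  \ip[\big]{v_{0,\lambda}(t,x)}{h}_H
  = \int_0^T\!\!\int_G v_{0,\lambda}(t,x)(\tau,z)\,(Qh)(\tau,z)\,dz\,d\tau.
\]

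Next I would substitute the explicit form
\[
  v_{0,\lambda}(t,x)(\tau,z) = K_{t-\tau}(x,z)\,\sigma(u_\lambda(\tau,z))\,1_{[0,t]}(\tau)
\]
and argue pointwise nonnegativity of each factor: $K_{t-\tau}(x,z)\geq 0$ because $S$ is Markovian, hence positivity-preserving, so its kernel is nonnegative; $\sigma(u_\lambda(\tau,z))\geq 0$ by the standing assumption $\sigma\geq 0$ in this section; and $1_{[0,t]}(\tau)\geq 0$ trivially. Therefore $v_{0,\lambda}(t,x)\geq 0$ pointwise in $(\tau,z)$.

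Combining this with the hypothesis $Qh\geq 0$, the integrand of the displayed integral is nonnegative almost everywhere, so the integral itself is nonnegative, which is the required conclusion. There is no real obstacle here; the only mildly subtle point is the interpretation of $Qh$ when $h$ lives in the completion $L^2_Q$ rather than in $L^2$ itself, but this is handled by the isometry $Q^{1/2}\colon L^2_Q\to L^2$ (available because $\ker Q^{1/2}=\{0\}$), which makes $Qh=Q^{1/2}(Q^{1/2}h)$ an honest $L^2(G)$-valued function whose pointwise sign is well-defined.
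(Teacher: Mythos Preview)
Your proof is correct and follows essentially the same route as the paper's: both compute $\ip{v_{0,\lambda}(t,x)}{h}_H$ as the integral $\int_0^t\!\int_G K_{t-\tau}(x,z)\,\sigma(u_\lambda(\tau,z))\,[Qh](\tau,z)\,dz\,d\tau$ and conclude by nonnegativity of each factor (the paper phrases the kernel positivity as $S$ being positivity-preserving, which is equivalent). Your remark on interpreting $Qh$ via the isometry $Q^{1/2}\colon L^2_Q\to L^2$ is exactly the justification the paper gives in its first paragraph.
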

\begin{proof}
  Note that $h(t) \in L^2_Q$ for a.a. $t \in [0,T]$, hence $Qh(t) \in
  L^2$ for a.a. $t \in [0,T]$ and $Qh \in L^2(0,T;L^2) \simeq
  L^2(G_T)$, so the statement $Qh \geq 0$ is meaningful.
  One has
  \begin{align*}
    \ip[\big]{v_{0\lambda}(t,x)}{h}_{H}
    &= \int_0^T\!\!\int_G K_{t-\tau}(x,z) \, \sigma(u_\lambda(\tau,z))
      \, 1_{[0,t]}(\tau) [Qh](\tau,z)\,d\tau\,dz\\
    &= \int_0^t S(t-\tau) \sigma(u_\lambda(\tau)) [Qh](\tau)\,d\tau,
  \end{align*}
  where $\sigma \geq 0$ and $S$ is a positivity-preserving
  semigroup. The result then follows immediately.
\end{proof}

Let us set, for any $h \in H$, $v_\lambda^h :=
\ip{v_\lambda}{h}_H$. Then $v_\lambda^h$ satisfies
\[
  v_\lambda^h(t) = \ip[\big]{v_{0,\lambda}}{h}_H
  + \int_0^t S(t-s) f'_\lambda(u_\lambda(s)) v_\lambda^h(s)
  + \int_0^t S(t-s) \sigma'(u_\lambda(s)) v_\lambda^h(s) B\,dW,
\]
i.e., by the previous lemma, it is the mild solution to
\begin{equation}
  \label{eq:lam}
  dv_\lambda^h + Av_\lambda^h = \bigl( \sigma(u_\lambda)Qh +
  f'_\lambda(u_\lambda) v_\lambda^h\bigr)\,dt + \sigma'(u_\lambda)
  v_\lambda^h B\,dW, \qquad v_\lambda^h(0) = 0.
\end{equation}
Completely analogously, $y_\lambda^h := \ip{y_\lambda}{h}_H$ ie the
mild solution to
\[
  dy_\lambda^h + Ay_\lambda^h = \bigl( \sigma(u_\lambda)Qh
  + \sigma'(u_\lambda) y_\lambda^h B\,dW, \qquad y_\lambda^h(0) = 0.
\]
We are going to compare $v_\lambda^h$ and $y_\lambda^h$ pointwise for
a certain class of vectors $h$. To do so, we need to impose a
regularity assumption on the noise that will be removed later.
\begin{lemma}
  Assume that $B \in \gamma(U;X)$, with $X$ a Banach space
  continuously embedded in $L^\infty$. If $h \in H$ is such that
  $Qh \geq 0$, then
  \begin{equation}
    0 \leq {\ip{v_\lambda}{h}}_H \leq {\ip{y_\lambda}{h}}_H.
  \end{equation}
\end{lemma}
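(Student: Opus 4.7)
The plan is to deduce both inequalities from It\^o's formula applied to (a $C^2$ regularization of) the functional $\Phi(z) := \tfrac12 \norm{z^-}^2_{L^2(G)}$. First I would take $z = v_\lambda^h$ to establish $v_\lambda^h \geq 0$; then, using that positivity, I would apply the same argument to $z = y_\lambda^h - v_\lambda^h$ to obtain $v_\lambda^h \leq y_\lambda^h$. The justification for applying It\^o's formula to a mild solution is the semimartingale-approximation scheme already invoked in the proof of Proposition~\ref{prop:vl}.

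For the lower bound, reading \eqref{eq:lam} in differential form and computing $d\Phi(v_\lambda^h)$ produces four deterministic contributions plus a real-valued local martingale. The elliptic term yields $\ip{(v_\lambda^h)^-}{Av_\lambda^h}\,dt \leq 0$ by the Beurling--Deny criterion, which applies since $S$ is self-adjoint and Markovian. The source contributes $-\ip{(v_\lambda^h)^-}{\sigma(u_\lambda) Qh}\,dt \leq 0$, because $\sigma \geq 0$, $Qh \geq 0$ and $(v_\lambda^h)^- \geq 0$. The term linear in $v_\lambda^h$ gives $\int_G f'_\lambda(u_\lambda) ((v_\lambda^h)^-)^2\,dx\,dt \leq 0$, using the identity $v_\lambda^h (v_\lambda^h)^- = -((v_\lambda^h)^-)^2$ together with $f'_\lambda \leq 0$. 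Finally, the It\^o correction is $\tfrac12 \int_G (\sigma'(u_\lambda))^2 ((v_\lambda^h)^-)^2\, Q(x,x)\,dx\,dt \leq C\, \Phi(v_\lambda^h)\,dt$, with $C$ depending on $\norm{\sigma'}_\infty$ and $\norm{B}^2_{\gamma(U;X)}$; the embedding $X \embed L^\infty$ is exactly what keeps the diagonal of $Q$ bounded. Combining these with the local-martingale term (handled by localization) and invoking Gronwall's lemma from $\Phi(v_\lambda^h)(0) = 0$ forces $\Phi(v_\lambda^h) \equiv 0$, hence $v_\lambda^h \geq 0$ almost everywhere on $\Omega \times [0,T] \times G$.

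For the upper bound, I would form $w := y_\lambda^h - v_\lambda^h$, which is the mild solution with zero initial condition to
\[
  dw + Aw\,dt = -f'_\lambda(u_\lambda) v_\lambda^h\,dt + \sigma'(u_\lambda) w B\,dW.
\]
By the previous step and $f'_\lambda \leq 0$, the forcing $-f'_\lambda(u_\lambda) v_\lambda^h$ is non-negative, while the drift coefficient of $w$ itself vanishes; repeating the It\^o computation on $\Phi(w)$ produces only analogues of the elliptic, source and It\^o-correction terms, all signed in our favor, and Gronwall once more yields $w \geq 0$, i.e.\ $v_\lambda^h \leq y_\lambda^h$.

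The main obstacle is the rigorous application of It\^o's formula, delicate on two counts. First, $\Phi$ is not of class $C^2$, so it must be replaced by a monotone $C^2$ regularization $\phi_\varepsilon \uparrow \tfrac12(\cdot^-)^2$ with $\phi_\varepsilon'' \to 1_{(-\infty,0)}$ and the limit taken via dominated convergence, which is comfortable because every term involves only $z^-$ or $1_{\{z<0\}} z^2 = (z^-)^2$. Second, $v_\lambda^h$ is only a mild solution, so one first replaces \eqref{eq:lam} with a Yosida-type strict version (as in~\cite{cm:SIMA18}) and passes to the limit at the end. The standing regularity hypothesis $B \in \gamma(U;X) \embed \gamma(U;L^\infty)$ enters only through the estimate of the It\^o correction term, and is precisely what allows the Gronwall step to close.
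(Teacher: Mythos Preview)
Your proposal is correct and follows the same approach as the paper. The paper invokes a ready-made comparison principle from \cite{cm:pos1}, whose hypothesis is precisely the inequality
\[
  -\ip[\big]{\text{drift}(\phi)}{\phi^-}_{L^2}
  + \tfrac12 \norm[\big]{1_{\{\phi \leq 0\}}\, \text{diffusion}(\phi)}^2_{\cL^2}
  \lesssim \norm[\big]{\phi^-}^2_{L^2},
\]
and you have essentially reproduced the proof of that principle inline via It\^o's formula for $\tfrac12\norm{z^-}^2_{L^2}$ plus Gronwall; the sign checks and the use of $B \in \gamma(U;X)$ with $X \embed L^\infty$ to control the It\^o correction are identical in substance.
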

\begin{proof}
  The boundedness of $f'_\lambda$ and the hypothesis on $B$ imply that
  the equation for $v^h_\lambda$ is well-posed in
  $L^p(\Omega;C([0,T];L^2))$, hence we can apply the maximum principle
  in \cite{cm:pos1}. This says that if
  \[
    -\ip[\big]{\sigma(u_\lambda)Qh + f'_\lambda(u_\lambda)\phi}{\phi^-}_{L^2}
    + \frac12 \norm[\big]{1_{\{\phi \leq 0\}} \sigma'(u_\lambda) \phi B}^2_{\cL^2}
    \lesssim \norm[\big]{\phi^-}^2_{L^2}
  \]
  for every $\phi \in L^2$, then $v_\lambda^h \geq 0$. Since
  $\sigma \geq 0$, $Qh \geq 0$, and $f'_\lambda$ is bounded by
  $1/\lambda$, one obtains
  \[
    -\ip[\big]{\sigma(u_\lambda)Qh + f'_\lambda(u_\lambda)\phi}{\phi^-}_{L^2}
    \leq \frac{1}{\lambda}
    \norm[\big]{\phi^-}^2_{L^2}.
  \]
  Moreover, thanks to the hypothesis on $B$ and the boundedness of
  $\sigma'$, $\phi \mapsto \sigma'(u_\lambda) \phi B$ is Lipschitz
  continuous with values in the space of Hilbert-Schmidt operators,
  and
  \[
    \norm[\big]{1_{\{\phi \leq 0\}} \sigma'(u_\lambda) \phi
      B}^2_{\cL^2} \lesssim \norm[\big]{\phi^-}^2_{L^2}.
  \]
  The proof that $v_\lambda^h \geq 0$ is thus completed. The
  difference $y^h_\lambda - v^h_\lambda$ satisfies
  \[
    y^h_\lambda(t) - v^h_\lambda(t)
    = \int_0^t S(t-s) (-f'_\lambda(u_\lambda))v^h_\lambda\,ds
    + \int_0^t S(t-s) \sigma'(u_\lambda) (y^h_\lambda(t) - v^h_\lambda(t))
      B\,dW(s),
  \]
  hence, again applying the above-mentioned comparison principle,
  $y^h_\lambda - v^h_\lambda \geq 0$ if
  \[
    \ip[\big]{f'_\lambda(u_\lambda)v_\lambda^h}{\phi^-}_{L^2}
    + \frac12 \norm[\big]{1_{\{\phi \leq 0\}} \sigma'(u_\lambda) \phi B}^2_{\cL^2}
    \lesssim \norm[\big]{\phi^-}^2_{L^2},
  \]
  which is the case because $f'_\lambda(u_\lambda) \leq 0$ and
  $v^h_\lambda \geq 0$. The proof is thus concluded.
\end{proof}

In order to remove the assumption on $B$ of the lemma, consider a
linear equation of the type
\[
  w(t) = w_0(t) + \int_0^t S(t-s) F(s)w(s)\,ds
  + \int_0^t S(t-s) \Sigma(s) w(s) C\,dW(s)
\]
for $L^q$-valued processes, where $F$ and $\Sigma$ are bounded
random fields. Then results on continuous dependence of solutions on
coefficients (cf.~\cite{KvN2}), or a direct computation using the
maximal estimate of Lemma~\ref{lm:msc}, shows that the map
\begin{align*}
  \gamma(U;L^q) &\longrightarrow L^p(\Omega;C([0,T];E_\eta)\\
  C &\longmapsto w
\end{align*}
is continuous.

Now we can prove the crucial estimate.
\begin{prop}
  If $h \in H$ is such that $Qh \geq 0$, then
  \begin{equation}
    \label{eq:ovi}
    0 \leq {\ip{v_\lambda}{h}}_H \leq {\ip{y_\lambda}{h}}_H.
  \end{equation}
\end{prop}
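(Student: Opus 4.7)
The plan is to remove the hypothesis $B \in \gamma(U;X)$ of the preceding lemma by a regularization of $B$ that preserves the underlying Gaussian structure. Fix $\alpha > d/(2q)$ and set $B_\varepsilon := (I+\varepsilon A)^{-\alpha} B$ for $\varepsilon>0$; the ideal property of $\gamma$-Radonifying operators combined with $E^q_\alpha \embed C(\overline{G})$ gives $B_\varepsilon \in \gamma(U;X)$ with $X := C(\overline{G}) \embed L^\infty(G)$, while $B_\varepsilon \to B$ in $\gamma(U;L^q)$. Crucially, neither $u_\lambda$ nor $Q=BB^*$ is altered by this approximation, so the Hilbert space $H$ and the condition $Qh \geq 0$ remain unchanged.

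Given $h \in H$ with $Qh \geq 0$, I would introduce $v_\lambda^{h,\varepsilon}$ and $y_\lambda^{h,\varepsilon}$ as the mild solutions of the scalar linear equations obtained from \eqref{eq:lam} and its $y_\lambda$-analogue by replacing only the noise operator $B$ with $B_\varepsilon$, keeping $u_\lambda$, $Q$, the source $\sigma(u_\lambda)Qh$, and the coefficients $f'_\lambda(u_\lambda)$ and $\sigma'(u_\lambda)$ unchanged. Since $B_\varepsilon$ satisfies the smoothness hypothesis of the previous lemma, the maximum-principle argument carried out there applies verbatim to these modified equations (the only $B$-dependent step is the Hilbert--Schmidt bound for $\phi \mapsto \sigma'(u_\lambda)\phi B_\varepsilon$, which still holds) and yields
\[
  0 \leq v_\lambda^{h,\varepsilon}(t,x) \leq y_\lambda^{h,\varepsilon}(t,x)
\]
for almost every $(t,x) \in [0,T]\times G$.

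Next I would invoke the continuous-dependence statement recalled immediately before the proposition, applied with $F = f'_\lambda(u_\lambda)$, $\Sigma = \sigma'(u_\lambda)$, $w_0 = \ip{v_{0,\lambda}}{h}_H$, and $C = B_\varepsilon \to B$ in $\gamma(U;L^q)$: this yields $v_\lambda^{h,\varepsilon} \to v_\lambda^h := \ip{v_\lambda}{h}_H$ and, analogously, $y_\lambda^{h,\varepsilon} \to y_\lambda^h$ in $L^p(\Omega;C([0,T];E^q_\eta))$ for any $\eta \in \mathopen]d/(2q),1/2-1/p\mathclose[$. For such $\eta$ one has $E^q_\eta \embed C(\overline{G})$, so extracting a subsequence along which the convergence is pointwise a.e. in $(t,x)$ preserves the inequality in the limit, giving \eqref{eq:ovi}.

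The main point, more conceptual than technical, is to recognize that the regularization must not globally substitute $B_\varepsilon$ for $B$, which would perturb the Gaussian structure and hence the very definition of $v_\lambda = Du_\lambda$; it should only replace the $B$ appearing as a coefficient in the well-posed linear SPDEs satisfied by $v_\lambda^h$ and $y_\lambda^h$. Once this distinction is made, the result follows by a direct combination of the smooth-noise lemma and the continuity of the solution map $C \mapsto w$.
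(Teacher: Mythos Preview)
Your proof is correct and follows essentially the same route as the paper: regularize $B$ by $B_\varepsilon:=(I+\varepsilon A)^{-\alpha}B$, apply the preceding lemma to the approximating linear equations, and pass to the limit via the continuous-dependence result stated just before the proposition. Your explicit remark that only the $B$ appearing as diffusion coefficient in the linear SPDEs for $v_\lambda^h$ and $y_\lambda^h$ should be replaced---leaving $u_\lambda$, $Q$, and the source $\sigma(u_\lambda)Qh$ untouched---makes precise what the paper leaves implicit, but the argument is the same.
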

\begin{proof}
  Let $\alpha>0$ be such that $(I+A)^{-\alpha}L^q \embed L^\infty$ and
  set $B_\varepsilon := (I + \varepsilon A)^{-\alpha}B$,
  $\varepsilon>0$. Then $B_\varepsilon$ satisfies the hypothesis of
  the lemma and $B_\varepsilon \to B$ in $\gamma(U;L^q)$ as
  $\varepsilon \to 0$. Denote the solutions to the equations for
  $v^h_\lambda$ and $y^h_\lambda$ with $B$ replaced by $B_\varepsilon$
  by $v^h_{\lambda,\varepsilon}$ and $y^h_{\lambda,\varepsilon}$,
  respectively. Then
  $0 \leq v^h_{\lambda,\varepsilon} \leq y^h_{\lambda,\varepsilon}$ by
  the lemma, hence $0 \leq v^h_\lambda \leq y^h_\lambda$ taking the
  limit as $\varepsilon \to 0$.
\end{proof}

If $h \in L^2([0,T] \times G)$, $h \geq 0$, since we have assumed that
$Q$ is positivity preserving, then $Qh \geq 0$ and
\[
  0 \leq {\ip{v_\lambda}{h}}_H = \ip{Q^{1/2}v_\lambda}{Q^{1/2}h}
  = \ip{Qv_\lambda}{h}.
\]
Since this holds for an arbitrary such $h$, we infer that
\[
  Qy_\lambda(t,x) \geq Qv_\lambda(t,x) \geq 0 \qquad \text{for
    a.a. }(t,x) \in [0,T] \times G.
\]
In view of \eqref{eq:ovi}, we can thus proceed as follows:
$Qv_\lambda \geq 0$ yields
\[
\norm{v_\lambda}_H^2 = {\ip{v_\lambda}{v_\lambda}}_H
\leq {\ip{y_\lambda}{v_\lambda}}_H
\]
and $Qy_\lambda \geq 0$ yields
\[
  {\ip{v_\lambda}{y_\lambda}}_H \leq {\ip{y_\lambda}{y_\lambda}}_H
  = \norm{y_\lambda}_H^2.
\]
We have thus shown that
\[
{\norm{v_\lambda}}_H^2 \leq {\norm{y_\lambda}}_H^2,
\]
i.e. the proof of Proposition~\ref{prop:vlyl} is completed.

Since estimates in $L^p(\Omega;C([0,T];L^\infty)$ for
$\norm{y_\lambda(t,x)}_H$ uniform with respect to $\lambda$ can be
easily obtained, as $y_\lambda$ satisfies an equation with Lipschitz
coefficients, we arrive at the following result.
\begin{thm}
  Assume that $Q$ is positivity preserving. Then
  \[
  \sup_{(t,x)} \E\norm{Du(t,x)}_H^p < \infty
  \]
  for every $p>0$.
\end{thm}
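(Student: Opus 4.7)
The key ingredient is the pointwise comparison provided by Proposition~\ref{prop:vlyl}, which reduces the problem to estimating $y_\lambda$ uniformly in $\lambda$. The plan is thus to: (i) obtain uniform-in-$\lambda$ bounds on $\norm{y_\lambda(t,x)}_H$ in $\bL^p C_t L^\infty_x$; (ii) transfer them via Proposition~\ref{prop:vlyl} to $v_\lambda$; (iii) pass to the limit $\lambda\to 0$ to identify $v=Du$ and conclude by Fatou's lemma.

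For step (i), note that the equation satisfied by $y_\lambda$ is linear with bounded Lipschitz diffusion coefficient $\sigma'(u_\lambda)$, and has only the forcing term $v_{0,\lambda}$, with no $f'_\lambda$ appearing. The computation already carried out in Section~3 gives
\[
  \norm[\big]{v_{0,\lambda}}_{\bL^p L^\infty_{t,x}H}
  \lesssim \bigl(1+\norm{u_0}_{C(\overline{G})}\bigr)\norm{B}_{\gamma(U;L^q)}T^{(1-2\eta)/2},
\]
uniformly in $\lambda$. Applying the stochastic maximal estimate of Lemma~\ref{lm:msc} with an exponent $\eta>d/(2q)$ (so that $E^q_\eta\embed L^\infty$) to the stochastic convolution term in the equation for $y_\lambda$, together with the boundedness of $\sigma'$, one reduces to a Volterra-type inequality
\[
  \norm[\big]{y_\lambda}_{\bL^p C_{[0,t]} L^\infty_x H}^p
  \lesssim \norm[\big]{v_{0,\lambda}}_{\bL^p C_t L^\infty_x H}^p
  + \int_0^t \norm[\big]{y_\lambda}_{\bL^p C_{[0,s]} L^\infty_x H}^p\,ds,
\]
which by Gronwall's lemma yields a uniform bound on $y_\lambda$ in $\bL^p C_t L^\infty_x H$.

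For step (ii), Proposition~\ref{prop:vlyl} gives $\norm{v_\lambda(t,x)}_H\leq\norm{y_\lambda(t,x)}_H$ for a.e.\ $(t,x)$, so the uniform bound on $y_\lambda$ transfers to
\[
  \sup_{\lambda>0}\sup_{(t,x)\in[0,T]\times G}\E\norm{v_\lambda(t,x)}_H^p<\infty.
\]
For step (iii), by the uniform bound just obtained, $(v_\lambda)$ is bounded in $L^p(\Omega\times[0,T];L^q(G;H))$, hence (passing to a subsequence) weakly convergent to some $\zeta$. Exactly as in the proof of the theorem in Section~4, Mazur's lemma produces convex combinations $z_n=\sum_k\alpha_{n,k}v_{\lambda_k}$ converging strongly, hence up to a further subsequence converging a.e.\ pointwise in $L^r(\Omega;H)$; the corresponding convex combinations $\widetilde{u}_n$ of $u_{\lambda_k}$ converge to $u$ in $\bL^p C_{t,x}$. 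Closability of $D$ then forces $\zeta=Du$ a.e.\ and $z_n(t,x)\to Du(t,x)$ in $L^r(\Omega;H)$ for a.e.\ $(t,x)$. Fatou's lemma gives
\[
  \E\norm{Du(t,x)}_H^p\leq\liminf_{n\to\infty}\E\norm{z_n(t,x)}_H^p
  \leq\sup_\lambda\sup_{(t,x)}\E\norm{v_\lambda(t,x)}_H^p,
\]
and the conclusion follows.

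The main obstacle is step (iii): the comparison in Proposition~\ref{prop:vlyl} and the bound on $y_\lambda$ are pointwise in $(t,x)$ up to a $\lambda$-dependent null set, while Mazur's lemma yields only a.e.\ pointwise convergence after passing to a subsequence. One must therefore be careful that the null sets union up harmlessly, and that closability of $D$ is applied to a sequence that genuinely converges a.e.\ in $(t,x)$ in $L^r(\Omega;H)$; this is handled by the standard diagonal extraction used in Section~4.
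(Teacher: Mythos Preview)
Your proposal is correct and follows the same route the paper takes: Proposition~\ref{prop:vlyl} reduces matters to uniform-in-$\lambda$ bounds on $y_\lambda$, which are easy because the equation for $y_\lambda$ has globally Lipschitz coefficients and no $f'_\lambda$ term, and then one passes to the limit. The paper leaves the passage to the limit entirely implicit (it simply states the theorem after remarking that the $y_\lambda$ bounds are ``easily obtained''), whereas you spell it out via Mazur's lemma and closability, mirroring the argument of the theorem in Section~4.

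One simplification worth noting: your step~(iii) can be shortened considerably. Once you have $u_\lambda(t,x)\to u(t,x)$ in $L^p(\Omega)$ together with $\sup_\lambda \E\norm{Du_\lambda(t,x)}_H^p<\infty$, the standard weak-closedness lemma for $\bD^{1,p}$ (e.g.\ Lemma~1.2.3 in Nualart) directly gives $u(t,x)\in\bD^{1,p}$ with $\E\norm{Du(t,x)}_H^p\leq\liminf_\lambda \E\norm{Du_\lambda(t,x)}_H^p$, and no Mazur step is needed. Your concern about $\lambda$-dependent null sets is also not serious: the uniform bound $\norm{y_\lambda}_{\bL^p C_t L^\infty_x H}\leq C$ from step~(i) is a bound on an essential supremum, so combined with the a.e.\ comparison it yields $\E\norm{v_\lambda(t,x)}_H^p\leq C$ for a.e.\ $(t,x)$ with a constant independent of both $\lambda$ and $(t,x)$; a countable sequence $\lambda_n\to 0$ then suffices for the limit.
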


This is the same result obtained in \cite{cm:POTA13}, as far as
first-order Malliavin derivatives goes, in the much simpler case of
additive noise, under the same conditions on $f$, $\sigma$, and
$Q$. Here we needed to assume slightly more on the semigroup $S$ in
order to extend it from $L^q$ to $L^q(H)$.


\bibliographystyle{amsplain}
\bibliography{ref}


\bigskip\bigskip

\setlength{\parindent}{0pt}

Carlo Marinelli\\
Department of Mathematics\\
University College London\\
Gower Street\\
London WC1E 6BT\\
United Kingdom\\
URL: \url{http://goo.gl/4GKJP}

\end{document}